\documentclass[10pt]{article}
\usepackage{geometry}                
\geometry{letterpaper}                   
\usepackage{graphicx}
\usepackage{amssymb,amsthm,amsmath}
\usepackage{epstopdf}
\usepackage{pdfsync}

\usepackage{color}

\def\sau{{\mathop{u}\limits^{\scriptscriptstyle{\,\circ}}}}


\DeclareGraphicsRule{.tif}{png}{.png}{`convert #1 `dirname #1`/`basename #1 .tif`.png}

\title{Homogenization of random convolution energies\\
in heterogeneous and perforated domains}
\author{
{\sc Andrea Braides
}
\\ Dipartimento di Matematica,
 Universit\`a di Roma `Tor Vergata'\\
via della Ricerca Scientifica, 00133 Rome, Italy\\
\and
{\sc Andrey Piatnitski}
\\
The Arctic University of Norway, UiT,  Campus
Narvik,\\ P.O. Box 385, Narvik 8505, Norway \\ and \\ Institute for Information Transmission Problems
of RAS,\\ 127051 Moscow, Russia 
\\
}
\date{
}                                      

\newtheorem{definition}{Definition}[section]
\newtheorem{lemma}[definition]{Lemma}
\newtheorem{theorem}[definition]{Theorem}
\newtheorem{proposition}[definition]{Proposition}
\newtheorem{corollary}[definition]{Corollary}
\newtheorem{remark}[definition]{Remark}

\def\Domain{D}

\def\e{\varepsilon}
\def\dx{\,dx}

\def\ZZ{\mathbb{Z}}
\def\NN{\mathbb{N}}
\def\rr{\mathbb{R}}

\def \trait (#1) (#2) (#3){\vrule width #1pt height #2pt depth #3pt}
\def \qed{\hfill
        \trait (0.1) (6) (0)
        \trait (6) (0.1) (0)
        \kern-6pt
        \trait (6) (6) (-5.9)
        \trait (0.1) (6) (0)
\medskip}

\begin{document}
\maketitle

\noindent
{\bf Abstract.} We prove a homogenization theorem for a class of quadratic convolution energies with random coefficients.
Under suitably stated hypotheses of ergodicity and stationarity we prove that the $\Gamma$-limit of such energy is almost surely a deterministic quadratic Dirichlet-type integral functional, whose integrand can be characterized through an asymptotic formula.
The proof of this characterization relies on results on the asymptotic behaviour of  
subadditive processes.
The proof of the limit theorem uses a blow-up technique common for local energies, that can be extended to this `asymptotically-local' case.
As a particular application we derive a homogenization theorem on random perforated domains.
\bigskip

\noindent
{\bf Keywords.} Homogenization, convolution functionals, random functionals, random perforated domains, non-local energies

\section{Introduction} In this paper we consider random energies of convolution type. Such energies may be interpreted for example in the context of mathematical models in population dynamics where macroscopic properties can be reduced to studying the evolution of the first-correlation functions describing the population density $u$ in the system \cite{KKP2008, FKK2012}.
Our model energies are defined on $L^2$-functions in a reference domain $D$ and are of the form
\begin{eqnarray}\label{def-Fe-solid-random1}
{1\over \e^{d+2}}\int_{D\times D}B^\omega \Bigl({x\over\e},{y\over\e}\Bigr)\, a\Bigl({y-x\over\e}\Bigr)
(u(y)-u(x))^2dy \dx,
\end{eqnarray}
or
\begin{eqnarray}\label{def-Fe-solid-random2}
{1\over \e^{d+2}}\int_{(D\cap \e E^\omega)\times (D\cap \e E^\omega)}a\Bigl({y-x\over\e}\Bigr)
(u(y)-u(x))^2dy \dx.
\end{eqnarray}
Here  $a:\rr^d\to \rr$  is a {\em convolution kernel}  which describes the strength of the interaction at a given distance and  $\e$ is a scaling parameter. In order that the limit of energies above be well-defined on $H^1(\Omega)$ we require that
\begin{equation}\label{finitesm}
\int_{\rr^d}a(\xi)(1+|\xi|^2)\,dx<+\infty.
\end{equation}
In \eqref{def-Fe-solid-random1} the strictly positive coefficient $B^\omega$
represents the features of the environment, while in \eqref{def-Fe-solid-random2} $E^\omega$ is a random perforated domain
giving the regions where interaction actually occurs, both depending on the realization of a random variable.
Note that functionals \eqref{def-Fe-solid-random2} can be also written as \eqref{def-Fe-solid-random1} with the degenerate coefficient $B^\omega (x,y)= \chi_{E^\omega}(x)\chi_{E^\omega}(y)$, where $\chi_E$ denotes the characteristic function of $E$.
Note that more in general we may consider oscillations on a different scale than $\e$; e.g.~taking coefficients $B^\omega ({x/\delta},{y/\delta})$ with $\delta=\delta_\e$, but the case when these two scales differ can be treated more easily by a separation-of-scale argument.

The effect of the scaling parameter $\e$ as $\e\to 0$ is twofold, on one hand producing a local limit model as the convolution kernel concentrates, and on the other hand ensuring a homogenization effect through the oscillations provided by $B^\omega$.
To illustrate the first issue, we may consider the underlying energies (those with the perturbation $B^\omega$ set to $1$)
\begin{eqnarray}\label{def-Fe-solid}
{1\over \e^{d+2}}\int_{D\times D} a\Bigl({y-x\over\e}\Bigr)
(u(y)-u(x))^2dy \dx.
\end{eqnarray}
 We note that if $u\in C^1(D)$ then
$u(y)-u(x)\approx\langle\nabla u(x),y-x\rangle$ and, using the change of variables $y=x+\e\xi$,
\begin{eqnarray}\label{def-Fe-solid-2}
\lim_{\e\to0}{1\over \e^{d+2}}\int_{D\times D} a\Bigl({y-x\over\e}\Bigr)
(\langle\nabla u(x),y-x\rangle)^2dy \dx= \int_{D} \int_{\rr^d}a(\xi)
(\langle\nabla u(x),\xi\rangle)^2d\xi \dx,
\end{eqnarray}
so that the quadratic functional
 \begin{eqnarray}\label{def-Fe-solid-3}
 \int_D \langle A\nabla u,\nabla u\rangle \dx, \qquad\hbox{ with }\qquad
 \langle A z, z\rangle= \int_{\rr^d}a(\xi)
(\langle z,\xi\rangle)^2d\xi,
\end{eqnarray}
gives an approximation of \eqref{def-Fe-solid}. Conversely, we may think of \eqref{def-Fe-solid} as giving a more general form of quadratic energies allowing for interactions between points at scale $\e$. In terms of $\Gamma$-convergence this computation can be extended to a $\Gamma$-limit result and obtain the corresponding convergence of minimum problems. To that end we will suppose that  $a:\rr^d\to \rr$  satisfies
\begin{equation}\label{ass-aaa}
0 \le a(\xi)\le C {1\over (1+|\xi|)^{d+2+\kappa}}.
\end{equation}
for some $C,\kappa>0$ (which is a quantified version of \eqref{finitesm}), and
\begin{eqnarray}\label{ass-aa}
 a(\xi)\ge c>0\quad \hbox{if } |\xi|\le r_0
\end{eqnarray}
for some $r_0>0$ and $c>0$.

In a $\Gamma$-convergence context energies \eqref{def-Fe-solid} have been considered as an approximation of a Dirichlet-type integral in phase-transition problems (see e.g.~\cite{AlbBel}) and more recently in connection with minimal-cut problems in Data Science \cite{GS}.
Limits of energies  similar to \eqref{def-Fe-solid}, of the form
\begin{eqnarray}\label{def-Fe-solid-Brezis}
{1\over \e^{d}}\int_{D\times D} a\Bigl({y-x\over\e}\Bigr)
\Bigl|{u(y)-u(x)\over y-x}\Bigr|^2dy \dx,
\end{eqnarray}
have also been studied by  Bourgain et al.~\cite{BBM} as an alternative definition of the $L^p$-norm of the gradient of a Sobolev function, while, in the context of Free-Discontinuity Problems, functionals of the form
\begin{eqnarray}\label{def-Fe-solid-Gobbino}
{1\over \e^{d}}\int_{D\times D} a\Bigl({y-x\over\e}\Bigr)
\min\Bigl\{\Bigl|{u(y)-u(x)\over y-x}\Bigr|^2, {1\over\e}\Bigr\}dy \dx,
\end{eqnarray}
have been proved to provide an approximation of the Mumford-Shah functionals by Gobbino \cite{Go} after a conjecture by De Giorgi. Furthermore, discrete counterparts of functionals \eqref{def-Fe-solid}; i.e., energies of the form
\begin{eqnarray}\label{def-Fe-lattice}
{1\over \e^{d+2}}\sum_{i,j\in \e\cal L} a_{ij}(u_i-u_j)^2
\end{eqnarray}
where $\cal L$ is a $d$-dimensional lattice have been widely investigated (see e.g.~\cite{PiatnitskiRemy,AliCic,BraidesSeoul,BK}) as a discrete approximation of quadratic integral functionals. Such type of functionals or the corresponding operators have been analyzed in different ways under various inhomogeneity and randomness assumptions (see e.g.~\cite{Kozlov,PiatnitskiRemy,BBL,BF,ACG,BBL,GS,BCR}).

In our case, we will prove a general homogenization result, which, under proper stationarity and ergodicity assumptions, will comprise both random coefficients and random perforated domains as in \eqref{def-Fe-solid-random1}, assuming that $B^\omega$ satisfies $0<\lambda_1\le B^\omega (x,y) \le \lambda_2<+\infty$, and \eqref{def-Fe-solid-random2} where $E^\omega$ is a {\em random perforated domain} consisting of a unique connected component. The limit behaviour of these energies is described by their $\Gamma$-limit in the $L^2(D)$ topology as a standard elliptic integral, of the form
\begin{equation}
F_{\rm hom}(u)=\int_\Domain \langle A_{\rm hom}\nabla u,\nabla u\rangle\dx.
\end{equation}
The matrix $ A_{\rm hom}$ is characterized by an asymptotic formula obtained using a limit theorem for subadditive processes.
The choice of the $L^2(D)$ topology is justified by the coerciveness of the convolution energies, which ensures the convergence of minimum problems.

\smallskip
The plan of the paper is as follows. In Section \ref{setting_sec} we define the general form of the random functionals that we are going to consider. Section \ref{s_compa} is devoted to the statement and proof of a compactness theorem.
The proof of this result follows closely that of the compactness result for non-linear convolution energies used to approximate Free-Discontinuity Problems obtained by Gobbino \cite{Go,LN98}; thanks to the quadratic growth conditions on the energies we can improve that result from $L^1$ to $L^2$ compactness. In Section \ref{s_poinc} we prove Poincar\'e and Poincar\'e-Wirtinger inequalities, which, together with the compactness result, justify the application of the direct method of the Calculus of Variations to minimum problems, and hence the asymptotic study of convolution energies in terms of $\Gamma$-convergence. In Section \ref{s_hom_formula} we use the stationarity and ergodicity properties of the energies to prove the existence of an asymptotic homogenization formula giving a deterministic homogeneous integrand using results on the asymptotic behaviour of almost-subadditive processes in \cite{KrPy}. The formula is used in Section \ref{hom} to prove the homogenization theorem using an adaptation to (non-local) homogenization problems of the blow-up technique of Fonseca and M\"uller \cite{FM,BMS}. Finally, in Section \ref{hom_perf} we remark that the result can be applied to the homogenization of random perforated domains.

\section{Setting of the problem}\label{setting_sec}
Let $\Domain$ be an open subset of $\rr^d$. For all $\e>0$ and $u\in L^2(\Domain)$ we will consider convolution-type energies of the form
\begin{eqnarray}\label{def-Fe-0}
F^\omega_\e(u)={1\over \e^{d+2}}\int_{\Domain}\int_{\Domain} b^\omega\Bigl({x\over\e},{y\over\e}\Bigr)
(u(y)-u(x))^2\,dy \dx,
\end{eqnarray}
where $b^\omega$ are stationary ergodic integrands satisfying
\begin{eqnarray}\label{ass-a}
0 \le b^\omega(x,y)\le C {1\over (1+|x-y|)^{d+2+\kappa}}.
\end{eqnarray}
More precisely, given a probability space $(\Omega,\mathcal{F},{\bf P})$ with an ergodic dynamical system $\tau_x$,
we assume that
\begin{equation}\label{def_b}
b^\omega(x,y)={\tt b}(\tau_x\omega, \tau_y\omega, x-y),
\end{equation}
where ${\tt b}(\omega_1, \omega_2, \xi)$ is a function define on $\Omega\times\Omega\times\mathbb R^d$
such that
\begin{eqnarray}\label{ass-a_bis}
0 \le {\tt b}(\omega_1,\omega_2,\xi)\le C {1\over (1+|\xi|)^{d+2+\kappa}}.
\end{eqnarray}
In order to make the definition of a function $b$ in  \eqref{def_b} well defined we need additional assumptions  on ${\tt b}$.
One option is to assume that ${\tt b}(\omega_1,\omega_2,\xi)={\tt b}_1(\omega_1) {\tt b}_2(\omega_2) a(\xi)$,
where ${\tt b}_1$ and ${\tt b}_2$ are nonnegative bounded random variables, and $a(\xi)$ is a measurable function in
$\mathbb R^d$ that satisfies  estimate \eqref{ass-aaa}.
Another option is to assume that $\Omega$ is a topological space, the group $\tau_x\omega$ is continuous in $x$,
and the function   ${\tt b}={\tt b}(\omega_1,\omega_2,\xi)$ is continuous in $\omega_1$ and $\omega_2$
and measurable in $\xi$ and ${\tt b}(\omega_1,\omega_2,\xi)\le a(\xi)$ with $a$ as above.
In both cases the definition of $b^\omega$ in \eqref{def_b} makes sense.

In order to obtain coerciveness properties which allow to include in our results both types of models
\eqref{def-Fe-solid-random1} and \eqref{def-Fe-solid-random2}; i.e., with integrands

$\bullet$ $b^\omega(x,y)= B^\omega (x,y) a(x-y)$
with $0<\lambda_1\le B^\omega (x,y) \le \lambda_2<+\infty$, or

$\bullet$ $b^\omega(x,y)= \chi\big._{E^\omega} (x)\chi\big._{E^\omega} (y) a(x-y),$

\noindent
we will make the following abstract assumption.

\begin{definition}\label{coerciveness}
 We say that $b^\omega$ is a {\em coercive energy function} if there exist constants $C$ and $\Xi_0$ such that for all $U$ open subsets of $\rr^d$, $z\in \rr^d$, $\Xi\geq \Xi_0$ and
$u\in L^2(U)$ satisfying the boundary condition
$$
u(x)= \langle z,x\rangle \hbox{\rm  \quad if \   {\rm dist}}(x,\partial U)<\Xi
$$
there exists a function $v\in L^2(U)$ satisfying the boundary condition
$$
v(x)= \langle z,x\rangle \hbox{\rm  \quad if \   {\rm dist}}(x,\partial U)<\Xi/2
$$
such that
\begin{eqnarray}\label{def-Fe-zero}
\int_{U\times U} b^\omega(x,y) (v(y)-v(x))^2\,dy \dx\le
\int_{U\times U} b^\omega(x,y) (u(y)-u(x))^2\,dy \dx,
\end{eqnarray}
and
\begin{eqnarray}\label{def-Fe-1}
\int_{\{x,y\in U:|x-y|<1\}} (v(y)-v(x))^2\,dy \dx\le C
\int_{U\times U} b^\omega(x,y) (v(y)-v(x))^2\,dy \dx.
\end{eqnarray}
\end{definition}

\begin{remark}\rm
Note that if $b^\omega(x,y)\ge C>0$ when $|x-y|<1$ and we take $u=v$ in the definition above, or if $b^\omega(x,y)=\chi_E(x)\chi_E(y)a(x-y)$ with $E$ a deterministic periodic perforated domain with $v$ a suitable extension of $u$ in the perforation constructed in \cite{2018BP} then $b^\omega$ is coercive.
\end{remark}

\begin{remark}[coerciveness]\label{co-co}\rm The terminology in Definition \ref{coerciveness} is justified by the Compactness Theorem in Section \ref{s_compa}, which ensures that if $b^\omega$ is a coercive energy function, then sequences bounded in $L^2(D)$ and for which the energy on the left-hand side of  \eqref{def-Fe-1} is equibounded admit $L^2_{\rm loc}(D)$ converging subsequences and their limit is in $H^1(D)$. \end{remark}




%


\subsection{Notation}Unless otherwise stated $C$ denotes a generic strictly positive constant independent of the parameters of the problem taken into account.

$Q_T=[-T/2,T/2]^d$ denotes the $d$-dimensional coordinate cube centered in $0$ and with side-length~$T$. If $T=1$ then we write $Q=Q_1$.

If $x,y\in \rr^d$ then $|y-x|_1=\sum_{j=1}^d|y_j-x_j|$.

$\lfloor t\rfloor$ denotes the integer part of $t\in\rr$.

$\chi_A$ denotes the characteristic function of the set $A$.

For all $t>0$ and $\Domain$ open subset of $\rr^d$ we denote
$\Domain(t)=\{x\in\Domain: {\rm dist}(x,\partial \Domain)>t\}$.

As a shorthand, the notation $\{P(\xi)\}$ will stand for $\{\xi\in\rr^d: P(\xi) \hbox{ holds}\}$ if no confusion may arise.

\section{A compactness theorem}\label{s_compa}
Let $\Domain$ be an open set with Lipschitz boundary. We show that families of functions  that have bounded energies of the type \eqref{def-Fe-solid} is compact in $L^2_{\rm loc}(\Domain)$.
To this end, for $0<r\le\sigma$, we define the functional
$$
F_\e^{\sigma,r}(w)=\int_{\Domain(\sigma)}\int_{\{|\xi|\leq r\}}
\Bigl({w(x+\e\xi)-w(x)\over\e}\Bigr)^2d\xi \dx, \quad w\in L^2(\Domain).
$$
In the case when $\Domain=\rr^d$ the $L^1_{\rm loc}$-compactness can be directly obtained by comparison with finite-difference energies approximating the Mumford-Shah functional studied by Gobbino \cite{Go}. Here we follow his proof, to deduce the $L^2_{\rm loc}$-compactness.

\begin{theorem}[compactness theorem]\label{t_comp}
Let $\Domain$ be an open set with Lipschitz boundary, and assume that for a family $\{w_\e\}_{\e>0}$, $w_\e\in L^2(\Domain)$,
the estimate  \begin{equation}\label{energ_bou}
F_\e^{k\e,r}(w_\e):=\int_{\Domain(k\e)}\int_{\{|\xi|\leq r\}}
\Bigl({w_\e(x+\e\xi)-w_\e(x)\over\e}\Bigr)^2d\xi \dx\le C
\end{equation}
is satisfied with some  $k>0$ and $r>0$.
Assume moreover that the family $\{w_\e\}$ is bounded in $L^2(\Domain)$.
Then for any sequence ${\e_j}$ such that $\e_j>0$ and $\e_j\to0$, as $j\to\infty$,  and for any open subset
$\Domain'\Subset\Domain$  the set $\{w_{\e_j}\}_{j\in\mathbb N}$ is relatively compact in $L^2(\Domain')$  and every its limit point is in $H^1(\Domain)$.
\end{theorem}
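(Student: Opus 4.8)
The plan is to deduce the statement from the fact that, although the functions $w_\e$ need not lie in $H^1$, their mollifications at any fixed small scale $\rho$ are uniformly (in $\e$) bounded in $H^1$ and approximate $w_\e$ in $L^2$ with an error $O(\rho)$ that is uniform in $\e$; $L^2_{\rm loc}$-compactness then follows from Rellich's theorem and a diagonal argument, while the $H^1$-regularity of a limit point follows by passing to the limit in the approximating inequality.

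\textbf{Step 1 (passing from the $\e$-scale to an $O(1)$-scale).} The core of the proof is the estimate: for every small $\rho>0$ there are $\bar\e(\rho)>0$ and a constant $C'$, independent of $\rho$ and $\e$, such that
\begin{equation*}
\int_{\{|\eta|\le\rho\}}\int_{\Domain(k\e+2\rho)}\bigl(w_\e(x+\eta)-w_\e(x)\bigr)^2\,dx\,d\eta\le C'\rho^{d+2},\qquad \e<\bar\e(\rho).
\end{equation*}
To obtain it, fix $\xi$ with $|\xi|\le r$ and an integer $N$; by telescoping and the Cauchy--Schwarz inequality,
\begin{equation*}
\bigl(w_\e(x+N\e\xi)-w_\e(x)\bigr)^2\le N\sum_{i=0}^{N-1}\bigl(w_\e(x+(i+1)\e\xi)-w_\e(x+i\e\xi)\bigr)^2 .
\end{equation*}
If $x\in\Domain(k\e+N\e r)$ then $x+i\e\xi\in\Domain(k\e)$ for $0\le i\le N$, so integrating in $x$ and changing variables $y=x+i\e\xi$ in the $i$-th term bounds it by $\int_{\Domain(k\e)}(w_\e(y+\e\xi)-w_\e(y))^2\,dy$; summing, integrating over $\{|\xi|\le r\}$, invoking \eqref{energ_bou}, and finally setting $\eta=N\e\xi$ (whence $d\xi=(N\e)^{-d}d\eta$) gives
\begin{equation*}
\int_{\{|\eta|\le N\e r\}}\int_{\Domain(k\e+N\e r)}\bigl(w_\e(x+\eta)-w_\e(x)\bigr)^2\,dx\,d\eta\le C\,(N\e)^{d+2}.
\end{equation*}
Choosing $N=N_\e:=\lceil\rho/(r\e)\rceil$, so that $\rho\le N_\e\e r<2\rho$ once $\e<\rho/r$, and shrinking the domains of integration yields the claim. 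Note that, since \eqref{energ_bou} only controls the \emph{average} of the $\e$-scale difference quotients over directions $|\xi|\le r$, this argument necessarily produces an $\eta$-\emph{averaged} bound.

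\textbf{Step 2 (uniform mollification estimates) and Step 3 (compactness).} Let $\phi_\rho$ be a standard mollifier supported in $\{|\eta|\le\rho\}$, and set $w_\e^\rho:=w_\e*\phi_\rho$. Writing $w_\e^\rho-w_\e$ and $\nabla w_\e^\rho$ as convolutions against $\phi_\rho$, respectively $\nabla\phi_\rho$, of the increments $w_\e(\cdot-\eta)-w_\e(\cdot)$ (using $\int\nabla\phi_\rho=0$), Jensen's and the Cauchy--Schwarz inequalities together with Step 1 yield, for every $\Domain''\Subset\Domain$ and all $\rho$ small and $\e<\bar\e(\rho)$,
\begin{equation*}
\|w_\e^\rho-w_\e\|_{L^2(\Domain'')}\le C\rho,\qquad \|\nabla w_\e^\rho\|_{L^2(\Domain'')}\le C,
\end{equation*}
with $C$ independent of $\rho$ and $\e$; the scaling $\rho^{d+2}$ in Step 1 is exactly what makes the gradient bound uniform in $\rho$. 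Hence, for each fixed $\rho$, $\{w_\e^\rho\}_\e$ is bounded in $H^1(\Domain'')$. Given $\e_j\to0$, Rellich's theorem and a diagonal extraction produce a subsequence (not relabelled) along which $w_{\e_j}^{1/m}$ converges in $L^2(\Domain'')$ for every $m\in\NN$; then $\|w_{\e_j}-w_{\e_l}\|_{L^2(\Domain'')}\le\|w_{\e_j}-w_{\e_j}^{1/m}\|+\|w_{\e_j}^{1/m}-w_{\e_l}^{1/m}\|+\|w_{\e_l}^{1/m}-w_{\e_l}\|$ gives $\limsup_{j,l}\|w_{\e_j}-w_{\e_l}\|_{L^2(\Domain'')}\le C/m$ for every $m$, so $\{w_{\e_j}\}$ converges in $L^2(\Domain'')$. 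Exhausting $\Domain$ by countably many such $\Domain''$ and a further diagonal extraction gives convergence in $L^2_{\rm loc}(\Domain)$, hence in $L^2(\Domain')$ for every $\Domain'\Subset\Domain$, which is the asserted relative compactness.

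\textbf{Step 4 (regularity of the limit) and the main difficulty.} If $w$ is an $L^2_{\rm loc}(\Domain)$-limit of a subsequence, then fixing $\Domain''\Subset\Domain$, choosing $\rho$ small, and passing to the limit in the inequality of Step 1 on a $\rho$-neighbourhood of $\Domain''$ (using the $L^2$-convergence of $w_{\e_j}$ there) gives $\int_{\{|\eta|\le\rho\}}\int_{\Domain''}(w(x+\eta)-w(x))^2\,dx\,d\eta\le C'\rho^{d+2}$; applying the estimates of Step 2 to $w$ itself then yields $\|\nabla(w*\phi_\rho)\|_{L^2(\Domain'')}\le C$ uniformly in $\rho$, while $w*\phi_\rho\to w$ in $L^2(\Domain'')$, so $w\in H^1(\Domain'')$ with $\|\nabla w\|_{L^2(\Domain'')}\le C$ independent of $\Domain''$; together with $\|w\|_{L^2(\Domain)}\le\liminf_j\|w_{\e_j}\|_{L^2(\Domain)}\le C$ and monotone convergence this gives $w\in H^1(\Domain)$. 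The main obstacle is Step 1: the interplay of the telescoping inequality with the change of variables, together with the bookkeeping of the nested domains $\Domain(k\e)\subset\Domain(k\e+N\e r)$ and of the Jacobian $(N\e)^{-d}$, is what pins down the exact exponent $\rho^{d+2}$ and hence the $H^1$-regularity of the limit; a secondary point is that \eqref{energ_bou} controls only directional averages of difference quotients, which is why one must work with mollifications rather than with a direct Riesz--Fréchet--Kolmogorov compactness criterion.
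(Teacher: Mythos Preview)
Your proof is correct and takes a genuinely different route from the paper's, even though both start from the same telescoping identity (your Step~1 is essentially the paper's Proposition~3.4).

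The paper proceeds in two disjoint pieces. For compactness it uses only the crude $L^\infty$ bounds $\|\sau_\delta\|_{L^\infty}\le C\delta^{-d/2}\|u\|_{L^2}$ and $\|\nabla\sau_\delta\|_{L^\infty}\le C\delta^{-d/2-1}\|u\|_{L^2}$ on mollifications (Proposition~3.3), which depend badly on $\delta$, and concludes via Arzel\`a--Ascoli at each fixed scale; the approximation $\|\sau_\delta-u\|_{L^2}\le C\delta$ is obtained as in your Step~2. For the $H^1$-regularity of the limit the paper abandons mollification entirely and invokes the slicing technique: it reduces to one-dimensional sections, compares with a discrete Dirichlet energy on piecewise-affine interpolants, and integrates over directions.

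Your key observation, absent from the paper, is that the scaling $\rho^{d+2}$ in Step~1 gives a $\rho$-\emph{uniform} bound $\|\nabla(w_\e*\phi_\rho)\|_{L^2(\Domain'')}\le C$. This single estimate lets you run Rellich instead of Arzel\`a--Ascoli for compactness, and, by passing Step~1 to the limit, also yields $w\in H^1$ directly---no slicing needed. Your argument is therefore more unified and self-contained; the paper's approach, on the other hand, connects to the existing machinery for the nonlinear functionals of Gobbino, which is presumably why the authors chose it.
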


Before proving the theorem we prove some auxiliary results.
We first introduce the local average of a function $u\in  L^2(\Domain)$ by
$$
\sau_\delta
=\int_{\{|\xi|\leq 1\}}u(x+\delta\xi)\phi(\xi)\,d\xi,
$$
where  $\phi$ is a symmetric non-negative $C_0^\infty$ function in $\mathbb R^d$ supported in the unit
ball centered at the origin, $\int\phi(\xi)\,d\xi=1$.  
In our framework the function $\sau_\delta$ is well defined in $\Domain(\delta)$.
The properties of the local average operator are described in the following statement.
\begin{proposition}\label{p_stek} Let $\delta$ and $\sigma$ be positive numbers with $\delta<\sigma$.
Then we have
\begin{equation}\label{bliz1}
\|\sau_\delta-u\|^2_{L^2(\Domain(\sigma))}\leq C_\phi\delta^2 F^{\sigma,1}_\delta(u).
\end{equation}
  For any $\delta>0$ such that $\Domain'\subset\Domain(\delta)$  the function $\sau_\delta$ is smooth in $\Domain'$
and satisfies the inequalities
\begin{equation}\label{est_mean}
\|\sau_\delta\|_{L^\infty(\Domain')}\leq C_\phi\delta^{-\frac d2}\|u\|_{L^2(\Domain)},\qquad
\|\nabla\sau_\delta\|_{L^\infty(\Domain')}\leq C_\phi\delta^{-\frac d2 -1}\|u\|_{L^2(\Domain)}.
\end{equation}
\end{proposition}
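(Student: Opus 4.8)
The plan is to treat the three estimates separately, since they are of different natures: \eqref{bliz1} is an $L^2$-approximation estimate, while \eqref{est_mean} are pointwise bounds obtained by differentiating under the integral sign.

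For \eqref{bliz1}, the key observation is that, since $\phi$ has integral one, we may write
\[
\sau_\delta(x)-u(x)=\int_{\{|\xi|\le1\}}\bigl(u(x+\delta\xi)-u(x)\bigr)\phi(\xi)\,d\xi .
\]
Squaring and applying the Cauchy–Schwarz inequality with weight $\phi$ (using $\int\phi=1$), we get
\[
\bigl(\sau_\delta(x)-u(x)\bigr)^2\le\int_{\{|\xi|\le1\}}\bigl(u(x+\delta\xi)-u(x)\bigr)^2\phi(\xi)\,d\xi .
\]
Since $\phi$ is bounded by some constant $\|\phi\|_\infty$ depending only on $\phi$, integrating over $\Domain(\sigma)$ and using Tonelli's theorem to exchange the order of integration yields
\[
\|\sau_\delta-u\|^2_{L^2(\Domain(\sigma))}\le\|\phi\|_\infty\int_{\Domain(\sigma)}\int_{\{|\xi|\le1\}}\bigl(u(x+\delta\xi)-u(x)\bigr)^2\,d\xi\,dx
=\|\phi\|_\infty\,\delta^2\,F^{\sigma,1}_\delta(u),
\]
which is \eqref{bliz1} with $C_\phi=\|\phi\|_\infty$. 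One minor point to check is that the inner integrand makes sense for $x\in\Domain(\sigma)$: for $|\xi|\le1$ and $\delta<\sigma$ we have $x+\delta\xi\in\Domain$, so everything is well defined.

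For \eqref{est_mean}, I would change variables $y=x+\delta\xi$ to write, for $x\in\Domain'\subset\Domain(\delta)$,
\[
\sau_\delta(x)=\delta^{-d}\int_{\{|y-x|\le\delta\}}u(y)\,\phi\!\Bigl(\frac{y-x}{\delta}\Bigr)\,dy .
\]
The $L^\infty$ bound on $\sau_\delta$ then follows from Cauchy–Schwarz: $|\sau_\delta(x)|\le\delta^{-d}\|\phi\|_\infty\|u\|_{L^2(\Domain)}\,|B_\delta|^{1/2}\le C_\phi\,\delta^{-d/2}\|u\|_{L^2(\Domain)}$, where $|B_\delta|^{1/2}=c_d\,\delta^{d/2}$. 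Smoothness of $\sau_\delta$ in $\Domain'$ and the formula $\nabla\sau_\delta(x)=\delta^{-d-1}\int u(y)\,(\nabla\phi)\!\bigl(\frac{y-x}{\delta}\bigr)\,dy$ are justified by differentiation under the integral sign, legitimate because $\phi\in C_0^\infty$, the domain of integration stays in a fixed compact subset of $\Domain$ for $x$ in a neighbourhood of any point of $\Domain'$, and $u\in L^2_{\rm loc}$; applying the same Cauchy–Schwarz estimate with $\nabla\phi$ in place of $\phi$ gives the gradient bound with the extra factor $\delta^{-1}$.

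The computations are essentially routine; the only place requiring a little care — and the one I would expect to be the main obstacle in writing a clean argument — is keeping track of which points are allowed (the domain restrictions $\Domain(\sigma)$, $\Domain(\delta)$, $\Domain'\Subset\Domain$) so that all translates $x+\delta\xi$ lie in $\Domain$ and the differentiation-under-the-integral step is rigorously justified on a neighbourhood of $\Domain'$ rather than just on $\Domain'$ itself.
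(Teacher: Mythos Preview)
Your proof is correct and follows essentially the same approach as the paper. The only cosmetic difference is that the paper expands $(\sau_\delta-u)^2$ as a double integral in $\xi,\eta$ and applies Cauchy--Schwarz globally, whereas you apply Jensen/Cauchy--Schwarz pointwise in $x$ before integrating; both arrive at the same bound with $C_\phi=\|\phi\|_\infty$, and the paper simply declares the estimates in \eqref{est_mean} ``standard'', which your argument fills in correctly.
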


\begin{proof}
For any $u\in L^2(\Domain)$ by the Cauchy-Schwartz inequality we have
\begin{eqnarray*}
\|\sau_\delta-u\|^2_{L^2(\Domain(\sigma))}&=&\int_{\Domain(\sigma)}\int_{\{|\xi|\leq 1\}}\int_{\{|\eta|\leq 1\}}
\big(u(x+\delta\xi)-u(x)\big)\,
\big(u(x+\delta\eta)-u(x)\big)\,\phi(\xi)\,\phi(\eta)\,d\eta\, d\xi\, dx
\\
&\leq&\delta^2\bigg(\int_{\Domain(\sigma)}\int_{\{|\xi|\leq 1\}}\int_{\{|\eta|\leq 1\}}
\Big(\frac{u(x+\delta\xi)-u(x)}{\delta}\Big)^2(\phi(\xi))^2dxd\xi d\eta\bigg)^\frac12\times
\\&&
\qquad\qquad\times
\bigg(\int_{\{|\xi|\leq 1\}}\int_{\{|\eta|\leq 1\}}
\Big(\frac{u(x+\delta\eta)-u(x)}{\delta}\Big)^2(\phi(\eta))^2dxd\xi d\eta\bigg)^\frac12
\\
&\le& C_\phi\delta^2 F^{\sigma,1}_\delta(u).
\end{eqnarray*}
The estimates in \eqref{est_mean} are standard.
\end{proof}
\begin{proposition}\label{p_multi}
For any $j\in\mathbb N$ such that $j\e\leq\mathrm{dist}(\Domain',\partial\Domain)-k\e$ the following inequality holds:
 \begin{equation}\label{energ_bou2}
F_{j\e}^{(j+k)\e,1}(u)\leq F_\e^{k\e,1}(u)
\end{equation}
for all $u\in L^2(\Domain)$.
\end{proposition}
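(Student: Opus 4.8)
The plan is to exploit the translation structure of the difference quotients through a telescoping-sum argument, combined with the Cauchy--Schwarz inequality, a measure-preserving change of variables, and the monotonicity $\Domain(t)\subseteq\Domain(s)$ for $s\le t$. We may assume the right-hand side $F_\e^{k\e,1}(u)$ is finite, otherwise there is nothing to prove; the case $j=1$ is immediate from $\Domain((k+1)\e)\subseteq\Domain(k\e)$, so the interest is in $j\ge 2$.

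First I would fix $x\in\Domain((j+k)\e)$ and $\xi$ with $|\xi|\le 1$ and write the long increment as a telescoping sum,
$$u(x+j\e\xi)-u(x)=\sum_{i=0}^{j-1}\bigl(u(x+(i+1)\e\xi)-u(x+i\e\xi)\bigr).$$
Squaring and using $\bigl(\sum_{i=0}^{j-1}a_i\bigr)^2\le j\sum_{i=0}^{j-1}a_i^2$ yields the pointwise bound
$$\Bigl({u(x+j\e\xi)-u(x)\over j\e}\Bigr)^2\le{1\over j\e^2}\sum_{i=0}^{j-1}\bigl(u(x+(i+1)\e\xi)-u(x+i\e\xi)\bigr)^2.$$
I would then integrate this over $x\in\Domain((j+k)\e)$ and $\{|\xi|\le 1\}$; the left-hand side is exactly $F_{j\e}^{(j+k)\e,1}(u)$, and by Fubini the right-hand side splits into $j$ summands to be handled separately.

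In the $i$-th summand, with $\xi$ frozen, I perform the translation $x\mapsto x'=x+i\e\xi$, which has unit Jacobian and turns the integrand into $\bigl(u(x'+\e\xi)-u(x')\bigr)^2$, while the domain of integration becomes $\Domain((j+k)\e)+i\e\xi$. The only genuine bookkeeping is the inclusion $\Domain((j+k)\e)+i\e\xi\subseteq\Domain(k\e)$, valid for all $0\le i\le j-1$ and $|\xi|\le 1$: if ${\rm dist}(x,\partial\Domain)>(j+k)\e$ then ${\rm dist}(x+i\e\xi,\partial\Domain)\ge{\rm dist}(x,\partial\Domain)-i\e>(j+k)\e-(j-1)\e>k\e$. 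Since the integrands are nonnegative, replacing each shifted domain by the larger set $\Domain(k\e)$ can only increase the value, so every summand is bounded by
$$\int_{\{|\xi|\le 1\}}\int_{\Domain(k\e)}\bigl(u(x+\e\xi)-u(x)\bigr)^2\,dx\,d\xi=\e^2\,F_\e^{k\e,1}(u).$$
Adding the $j$ identical bounds and restoring the factor $1/(j\e^2)$ from the pointwise inequality, the $j$ cancels and we obtain precisely $F_{j\e}^{(j+k)\e,1}(u)\le F_\e^{k\e,1}(u)$. I do not expect any real obstacle here; the one thing to watch is the domain inclusion above (it is exactly the bound $i\le j-1$ that makes it work), and the fact that the hypothesis $j\e\le{\rm dist}(\Domain',\partial\Domain)-k\e$ guarantees $\Domain'\subseteq\Domain((j+k)\e)$, which is what renders the resulting inequality useful in the proof of the compactness theorem.
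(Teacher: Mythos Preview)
Your proof is correct and follows essentially the same approach as the paper: a telescoping decomposition of the long increment, the Cauchy--Schwarz inequality for the sum of $j$ terms, and a translation in $x$ that lands each summand inside $\Domain(k\e)$. Your explicit verification of the inclusion $\Domain((j+k)\e)+i\e\xi\subseteq\Domain(k\e)$ is a step the paper leaves implicit, but otherwise the arguments coincide.
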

\begin{proof}
Representing $u(x+j\e\xi)-u(x)$ as $(u(x+j\e\xi)-u(x+(j-1)\e\xi))+(u(x+(j-1)\e\xi)-u(x+(j-2)\e\xi))+
\ldots+(u(x+\e\xi)-u(x))$ we obtain
$$
F_{j\e}^{(j+k)\e,1}(u)\leq j\int_{\Domain((j+k)\e)}\int_{\{|\xi|\leq1\}}\sum_{m=1}^{j}
\frac{\big(u(x+m\e\xi)-u(x+(m-1)\e\xi)\big)^2}{(j\e)^2}dxd\xi
$$
$$
\leq j^2\int_{\Domain(k\e)}\int_{\{|\xi|\leq1\}}
\frac{\big(u(x+\e\xi)-u(x)\big)^2}{(j\e)^2}dxd\xi = F_\e^{k\e,1}(u)
$$
as desired.\end{proof}

\begin{proof}[Proof of Theorem {\rm\ref{t_comp}}]
One may assume without loss of generality that $r=1$.  In order to prove the compactness result, it suffices to show that, fixed $D'$, for each $\delta>0$
 there  exists a relatively compact set  $\mathcal{K}_\delta$ in $L^2(\Domain')$ such that for any
 $j\in \mathbb N$  we have
 \begin{equation}\label{tech_est}
 \|w_{\e_j}-h_j\|_{L^2(\Domain')}\leq\delta
 \end{equation}
 for some $h_j\in\mathcal{K}_\delta$.

We define   $\mathcal{K}_\delta$ as follows.
If $\e_j\geq\delta$, we set $h_j=w_{\e_j}$; otherwise,
$$
h_j={\mathop{w}\limits^{\scriptscriptstyle{\,\circ}}}_{\e_j,\delta_j} =
\int_{\{|\xi|\leq 1\}}w_{\e_j}(x+\delta_j\xi)\phi(\xi)\,d\xi,
$$
where $\delta_j=\big\lfloor\frac{\delta}{\e_j}\big\rfloor\,\e_j$.
Note that $\frac12\delta<\delta_j\leq\delta$ for any $j$ such that $\e_j<\delta$.  We finally set
$\mathcal{K}_\delta=\mathop{\bigcup}\limits_{j=1}^\infty \{h_j\}$.

It is convenient to represent $\mathcal{K}_\delta$ as a union
$\mathcal{K}_\delta=\mathcal{K}_{\delta,1}\cup\mathcal{K}_{\delta,2}$ with
$$
\mathcal{K}_{\delta,1}=\bigcup_{\{j\,:\,\e_j\geq\delta\}}h_j,\qquad
\mathcal{K}_{\delta,2}=\bigcup_{\{j\,:\,\e_j<\delta\}}h_j
$$
Since $\e_j$ tends to zero as $j\to\infty$, the first set consists of a finite number of elements
and thus is compact. By \eqref{est_mean} for any $h_j\in \mathcal{K}_{\delta,2}$ we obtain
$$
|h_j(x)|\leq C(\delta), \quad  |\nabla h_j(x)|\leq C(\delta)\quad \hbox{for all }x\in\Domain'.
$$
 Therefore, by the Arzel\`a-Ascoli theorem, the set $\mathcal{K}_{\delta,2}$ is relatively compact
 in $C(\Domain')$. Consequently, this set is also relatively compact in $L^2(\Domain')$.   This yields the desired
 relative compactness of $\mathcal{K}_{\delta}$.

 If $\e_j\geq\delta$ then $h_j=w_{\e_j}$,  and \eqref{tech_est} holds. If
 $\e_j<\delta$, then by \eqref{bliz1} we get
 $$
 \|w_{\e_j}-h_j\|\leq C_\phi\delta_j F^{(\delta_j+k\e_j)}_{\delta_j,1}(w_{\e_j}).
 $$
 Combining this inequality with  \eqref{energ_bou2} and recalling that $\delta_j=\big\lfloor\frac{\delta}{\e_j}\big\rfloor\,\e_j$,
 we obtain
 $$
 \|w_{\e_j}-h_j\|\leq C_\phi\delta_j F^{k\e_j}_{\e_j,1}(w_{\e_j})\leq C\delta_j\leq C\delta;
 $$
here we have also used \eqref{energ_bou}. The last inequality implies \eqref{tech_est}.


It remains to show that each limit point $w$ is in $H^1(\Domain)$. To that end we may use the `slicing technique' (see e.g.~\cite{LN98} Section 4.1, \cite{GCB} Chapter 15 or \cite{Handbook} Section 3.4). This general method allows to reduce the analysis to that of one-dimensional sections, and recover a lower bound by integrating over all sections. It has already been applied in \cite{Go} to sequences of nonlinear functionals of the form
\begin{eqnarray}\label{def-Fe-Go}
{1\over \e^{d+1}}\int_{\Domain}\int_{\Domain} a\Bigl({y-x\over\e}\Bigr)
f\Bigl({(u(y)-u(x))^2\over\e}\Bigr)dy \dx
\end{eqnarray}
in order to obtain compactness in spaces of functions with bounded variation. In our case we are in a simplified situation with $f$ equal the identity and we can improve the result to compactness in $H^1(\Domain)$.

In the one-dimensional case it is not restrictive to study functionals of the form
\begin{eqnarray}\label{def-Ge}
G_\e(u)&=&\int_{(0,1)}\int_{(-1,1)}
\Bigl({u(x+\e\xi)-u(x)\over\e}\Bigr)^2 d\xi \dx,
\end{eqnarray}
and regard all functions as defined on $\rr$. With Fatou's lemma in mind, in order to have a lower bound it suffices to examine
separately the functionals
\begin{eqnarray}\label{def-Gexi}
G^\xi_\e(u)&=&\int_{(0,1)}
\Bigl({u(x+\e\xi)-u(x)\over\e}\Bigr)^2 \dx
\end{eqnarray}
for fixed $\xi\in(-1,1)$.

For simplicity, we treat the case $\xi\in (0,1)$. We may suppose that $u_\e\to u$ in $L^2(\rr)$. Note that for almost all $t\in(0,1)$ the piecewise-constant functions
$u_{\e,\xi,t}$ defined by
$$
u_{\e,\xi,t}(x)= u_\e(\e\xi t+\e\xi k)\qquad \hbox{if } \e\xi k\le x< \e\xi (k+1)
$$
converge to $u$ in $L^2(\rr)$, and we have
\begin{eqnarray}\label{Gexi-1}\nonumber
G^\xi_\e(u_\e)&\ge&\sum_{k=1}^{\lfloor1/\e\xi\rfloor-1}\int_{k\e\xi}^{(k+1)\e\xi}
\Bigl({u_\e(x+\e\xi)-u_\e(x)\over\e}\Bigr)^2 \,dt
\\ \nonumber
&=&\sum_{k=1}^{\lfloor1/\e\xi\rfloor-1}\int_{0}^{1}\e\xi
\Bigl({u_\e((k+1)\e\xi+t\e\xi)-u_\e(k\e\xi+t\e\xi)\over\e}\Bigr)^2 \,dt
\\ \nonumber
&=&\xi^2\int_{(0,1)}\sum_{k=1}^{\lfloor1/\e\xi\rfloor-1}\e\xi
\Bigl({u_{\e,\xi,t}((k+1)\e\xi)-u_{\e,\xi,t}(k\e\xi)\over\e\xi}\Bigr)^2 \,dt=
\\
&\ge&\xi^2\int_{(0,1)}\int_{(\delta,1-\delta)}(u'_{\e,\xi,t}(x))^2\dx \,dt,
\end{eqnarray}
eventually for all $\delta>0$ fixed, where we have identified the discrete function $k\e\xi \mapsto u_{\e,\xi,t}(k\e\xi )$ defined on $\e\xi\ZZ$ with its piecewise-affine interpolation. Note that for almost all $t$ this functions still converge to $u$.
From \eqref{Gexi-1} we deduce that $u\in H^1(\delta,1-\delta)$. By the arbitrariness of $\delta$ and the uniformity of the bound on
the $L^2$-norm of $u'$ we deduce that $u\in H^1(0,1)$. For more details on this proof we refer to \cite{LN98}, where the nonlinear case is treated.

The deduction of the $d$-dimensional lower bound from the $1$-dimensional one can be obtained by repeating word for word the proof of \cite{LN98} Theorem 5.19 with $G^\xi_\e$ in the place of $F^1_\e$ in the notation therein.
This completes the proof of the compactness.
\end{proof}

\section{Poincar\'e inequalities}\label{s_poinc}


We first prove a Poincar\'e-Wirtinger inequality as follows.

\begin{theorem}[Poincar\'e-Wirtinger inequality]\label{t_poin}
Let $\Domain$ be a Lipschitz bounded domain.
For each fixed $r_0>0$ there exists a constant $C>0$ such that for any
$v\in L^2(\Domain)$ we have
\begin{equation}\label{poin_solid}
\int_\Domain (v(x)-v_\Domain)^2\,dx\leq C \int_{\Domain}\int_
{\{\xi:|\xi|\le r_0, x+\e\xi\in\Domain\}}
{}
\Bigl({v(x+\e\xi)-v(x)\over\e}\Bigr)^2d\xi \dx,
\end{equation}
and $v_\Domain$ is the average of $v$ over $\Domain$. The constant $C$ does not depend on $\e$.
\end{theorem}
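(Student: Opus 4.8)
\emph{Sketch of proof.} The plan is to argue by contradiction, reducing to the Compactness Theorem~\ref{t_comp} together with a boundary estimate that encodes the Lipschitz regularity of $\partial\Domain$. It is enough to prove \eqref{poin_solid} for $\e$ in a fixed bounded interval $(0,\e_0]$, the range of interest in the sequel. So suppose it fails: there are $\e_j\in(0,\e_0]$ and $v_j\in L^2(\Domain)$ with $(v_j)_\Domain=0$, $\|v_j\|_{L^2(\Domain)}=1$ and
$$
\eta_j:=\int_{\Domain}\int_{\{|\xi|\le r_0,\ x+\e_j\xi\in\Domain\}}\Bigl({v_j(x+\e_j\xi)-v_j(x)\over\e_j}\Bigr)^2d\xi\,dx\ \longrightarrow\ 0 ,
$$
and, passing to a subsequence, $\e_j\to\e_*\in[0,\e_0]$. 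The change of variables $y=x+\e_j\xi$ gives $\e_j^{\,d+2}\eta_j=\int_{\{x,y\in\Domain,\ |x-y|<r_0\e_j\}}(v_j(y)-v_j(x))^2\,dy\,dx$.

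\smallskip\noindent\emph{The case $\e_*=0$.} For $x\in\Domain(r_0\e_j)$ and $|\xi|\le r_0$ we have $x+\e_j\xi\in\Domain$, so in the notation of Theorem~\ref{t_comp} (with $k=r=r_0$) we get $F_{\e_j}^{r_0\e_j,\,r_0}(v_j)\le\eta_j\le C$; together with $\|v_j\|_{L^2(\Domain)}=1$, these are the hypotheses of that theorem, so, along a further subsequence and a diagonal procedure over an exhaustion of $\Domain$, $v_j\to v$ in $L^2_{\rm loc}(\Domain)$ with $v\in H^1(\Domain)$. To see that $\nabla v\equiv0$, test the difference quotient against an arbitrary $\Phi\in C^\infty_c(\Domain\times\{|\xi|<r_0\})$: by Cauchy--Schwarz,
$$
\Bigl|\int_{\Domain}\int_{\{|\xi|<r_0\}}{v_j(x+\e_j\xi)-v_j(x)\over\e_j}\,\Phi(x,\xi)\,d\xi\,dx\Bigr|\ \le\ \sqrt{\eta_j}\;\|\Phi\|_{L^2}\ \longrightarrow\ 0 ,
$$
while moving the translation onto $\Phi$, passing to the limit using $v_j\to v$ in $L^2_{\rm loc}$, and integrating by parts (legitimate since $v\in H^1$) identifies the limit of the same integral with $\int_{\Domain}\int_{\{|\xi|<r_0\}}\langle\nabla v(x),\xi\rangle\,\Phi(x,\xi)\,d\xi\,dx$. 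Hence $\langle\nabla v(x),\xi\rangle=0$ for a.e.\ $(x,\xi)$, so $\nabla v\equiv0$; since $\Domain$ is connected, $v\equiv c$ for some constant $c$.

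\smallskip\noindent\emph{Upgrading $L^2_{\rm loc}(\Domain)$ to $L^2(\Domain)$: the main point.} To conclude one needs strong convergence in $L^2(\Domain)$, and this is where the Lipschitz character of $\partial\Domain$ enters, through its uniform interior cone condition. The estimate to be proved is: there exist $\rho_0>0$ and a modulus $\theta$ with $\theta(\rho)\to0$ as $\rho\to0^+$, and for each $\rho\in(0,\rho_0)$ a constant $C(\rho)$, such that for all $\e\in(0,\e_0]$ and $w\in L^2(\Domain)$,
$$
\int_{\Domain\setminus\Domain(\rho)}w^2\,dx\ \le\ \theta(\rho)\int_{\Domain(\rho/2)}w^2\,dx\ +\ C(\rho)\int_{\Domain}\int_{\{|\xi|\le r_0,\ x+\e\xi\in\Domain\}}\Bigl({w(x+\e\xi)-w(x)\over\e}\Bigr)^2d\xi\,dx .
$$
To prove it, one attaches to each $x$ close to $\partial\Domain$ an inward cone $x+\Gamma_x\subset\Domain$ of fixed aperture and fixed height $h_0$; its ``far slice'' $S_x$ — the points of $x+\Gamma_x$ at axial distance in $[h_0/2,h_0]$ — has $|S_x|\sim h_0^d$ and, for $\rho<h_0/2$, lies in $\Domain(\rho/2)$. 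For $\zeta\in S_x$ one joins $x$ to $\zeta$ inside the cone by a chain $x=z_0,z_1,\dots,z_N=\zeta$ with $|z_k-z_{k-1}|\le r_0\e$ and $N\le Ch_0/\e$, fattens the interior nodes over balls of radius $\sim r_0\e$ still contained in the cone, averages, and dominates $(w(x)-w(\zeta))^2$ by $N$ times a sum of $N$ averaged squared increments at scale $\le r_0\e$. Inserting this into $w(x)^2\le2w(\zeta)^2+2(w(x)-w(\zeta))^2$, averaging over $\zeta\in S_x$, integrating over $x\in\Domain\setminus\Domain(\rho)$ and exchanging orders of integration, the $w(\zeta)^2$ term is bounded by $C|\Domain\setminus\Domain(\rho)|\,h_0^{-d}\int_{\Domain(\rho/2)}w^2$ (for fixed $\zeta$ the admissible $x$ lie in the thin shell $\Domain\setminus\Domain(\rho)$), so one may take $\theta(\rho)=C|\Domain\setminus\Domain(\rho)|\,h_0^{-d}\to0$. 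The increments term is bounded by a constant depending on $\rho,h_0,r_0$ times $\int_{\{z,z'\in\Domain,\ |z-z'|<r_0\e\}}(w(z)-w(z'))^2\,dz\,dz'$: the powers of $\e$ cancel, since the Cauchy--Schwarz factor $N^2\sim\e^{-2}$ and the node-averaging normalisations $\sim\e^{-2d}$ are compensated by the multiplicity $\sim\e^{d}$ with which a given short pair is produced and by the $\e^{d+2}$ coming from rewriting the energy as $\e^{-(d+2)}\int_{\{|z-z'|<r_0\e\}}(w(z)-w(z'))^2$. Carrying out this bookkeeping, and in particular constructing the fattened chains inside $\Domain$ uniformly in $\e$ when $x$ and $\zeta$ are both close to $\partial\Domain$, is the main obstacle.

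\smallskip\noindent\emph{Conclusion.} Applying the last estimate with $w=v_j$ and letting $j\to\infty$ (using $\eta_j\to0$ and $\int_{\Domain(\rho/2)}v_j^2\le1$) gives $\limsup_j\int_{\Domain\setminus\Domain(\rho)}v_j^2\le\theta(\rho)$. Since $v_j\to c$ in $L^2_{\rm loc}$, also $\int_{\Domain(\rho/2)}v_j^2\to c^2|\Domain(\rho/2)|$; with $\int_\Domain v_j^2=1$ this forces $c^2|\Domain(\rho/2)|\ge1-\theta(\rho)$, and $\rho\to0$ gives $c^2\ge1/|\Domain|$, so $c\ne0$. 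On the other hand $0=\int_\Domain v_j=\int_{\Domain(\rho/2)}v_j+\int_{\Domain\setminus\Domain(\rho)}v_j$, where the first term tends to $c\,|\Domain(\rho/2)|$ and the second is at most $|\Domain\setminus\Domain(\rho)|^{1/2}\bigl(\int_{\Domain\setminus\Domain(\rho)}v_j^2\bigr)^{1/2}$, with $\limsup\le|\Domain\setminus\Domain(\rho)|^{1/2}\theta(\rho)^{1/2}\to0$; hence $c=0$, a contradiction. Finally, if $\e_*>0$ then $\e_j^{\,d+2}\eta_j=\int_{\{x,y\in\Domain,\ |x-y|<r_0\e_j\}}(v_j(y)-v_j(x))^2\to0$ (as $\eta_j\to0$ and $\e_j\to\e_*$), and a single chaining at the fixed scale $r_0\e_*/2$ — through a finite covering of the connected Lipschitz domain $\Domain$ by pieces in which any two points are joined by a bounded path inside $\Domain$ — upgrades this to $\int_\Domain\int_\Domain(v_j(y)-v_j(x))^2\to0$, i.e.\ $\|v_j-(v_j)_\Domain\|_{L^2(\Domain)}\to0$, again contradicting $\|v_j\|_{L^2(\Domain)}=1$.
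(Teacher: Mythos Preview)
Your approach is genuinely different from the paper's and is viable, but you should be aware that the two routes end up needing the same hard ingredient. The paper proves the inequality \emph{directly} by a lattice path-counting argument: first on a cube, where for each pair of $\e$-cells one writes $(v(x)-v(y))^2$ as a telescoping sum along a coordinate path of length $\lesssim L/\e$, squares, and counts how many paths cross a given cell (at most $(L/\e)^{d+1}$); this gives $\int_\Domain\!\int_\Domain(v(x)-v(y))^2\le C(L,d)\cdot(\hbox{RHS})$ with an \emph{explicit} constant. It then passes to strongly star-shaped domains by a bi-Lipschitz change of variables to a cube, and finally to general Lipschitz domains by decomposing into finitely many star-shaped pieces and comparing the various averages pairwise. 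No compactness, no contradiction.

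Your contradiction-plus-compactness scheme is sound in outline and your restriction to $\e\in(0,\e_0]$ is the right normalization. But note that the work you defer---the cone-based chaining in your boundary estimate (``the main obstacle''), and the ``single chaining at fixed scale $r_0\e_*/2$'' in the case $\e_*>0$---is precisely the same telescoping/path-counting computation the paper carries out globally. So you have not avoided the combinatorics; you have localized it to cones and then appealed to Theorem~\ref{t_comp} to handle the interior. What your approach buys is a clean separation of concerns (interior via compactness, boundary via geometry), at the cost of (i) losing any quantitative control on $C$, and (ii) still owing the reader the full chaining estimate. If you want to complete your argument rather than switch to the paper's, I would suggest proving directly the fixed-scale statement ``for any Lipschitz $\Domain$ and any fixed $\sigma>0$, $\int_{\Domain\times\Domain}(v(x)-v(y))^2\le C(\Domain,\sigma)\int_{\{x,y\in\Domain:|x-y|<\sigma\}}(v(x)-v(y))^2$'' by the paper's cube-then-star-shaped-then-patching method; this single lemma both furnishes your boundary estimate (apply it inside each cone, with $\sigma=r_0\e$, to control $(w(x)-w(\zeta))^2$) and dispatches the $\e_*>0$ case outright.
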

\begin{corollary}\label{co_poin} Let  $r_0>0$ be defined in \eqref{ass-aa}.
Let $k>0$ and $r>0$ be the same as in Theorem \ref{t_ext}. Then for any $u\in L^2(\Domain)$ the following inequality holds:
\begin{equation}\label{poi_modif}
\int_{\Domain(k\e)\cap\e E}\big(u(x)-u_{\{\Domain(k\e)\cap\e E\}}\big)^2\,dx\leq C F_\e(u);
\end{equation}
here
$$
u_{\{\Domain(k\e)\cap\e E\}}=\frac{1}{|\Domain(k\e)\cap\e E|}\int_{\{\Domain(k\e)\cap\e E\}} u(x)\,dx.
$$
\end{corollary}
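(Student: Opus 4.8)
The plan is to deduce Corollary~\ref{co_poin} from the Poincar\'e--Wirtinger inequality of Theorem~\ref{t_poin} by a scaling argument, applied on the perforated domain. First I would rescale: writing $x=\e y$, the set $\Domain(k\e)\cap\e E$ becomes $\frac1\e\Domain(k\e)\cap E$, which after a further rescaling should be comparable to a fixed ``unit-scale'' perforated domain. The main point is that Theorem~\ref{t_poin} as stated is a statement about a domain $D$ and interactions at scale $\e$ inside $D$; to apply it on the perforated set $\Domain(k\e)\cap\e E$ one must either invoke a version of Theorem~\ref{t_poin} on perforated Lipschitz domains, or first extend $u$ from $\Domain(k\e)\cap\e E$ to a function on a full (non-perforated) neighbourhood using the extension operator of Theorem~\ref{t_ext} (referenced in the statement, constructed as in \cite{2018BP}), so that the non-perforated Poincar\'e--Wirtinger inequality becomes applicable.

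Concretely, the steps I would carry out are: \emph{(i)} use Theorem~\ref{t_ext} to produce an extension $\widetilde u\in L^2(\Domain(k\e/2))$ (say) of $u|_{\Domain(k\e)\cap\e E}$ whose convolution energy on a neighbourhood, at interaction radius $r\e$, is controlled by a constant times $F_\e(u)$; here one uses that the extension is built component-by-component on rescaled cells of size $\e$, so the bound \eqref{def-Fe-1}-type estimate of the coercive energy function enters. \emph{(ii)} Apply Theorem~\ref{t_poin} to $\widetilde u$ on the Lipschitz domain $\Domain(k\e/2)$ (or a fixed Lipschitz subdomain of $\Domain$), with the given $r_0>0$ from \eqref{ass-aa}, obtaining
\[
\int_{\Domain(k\e/2)}(\widetilde u(x)-\widetilde u_{\Domain(k\e/2)})^2\,dx
\le C\int_{\Domain(k\e/2)}\int_{\{|\xi|\le r_0,\ x+\e\xi\in\Domain(k\e/2)\}}\Bigl(\tfrac{\widetilde u(x+\e\xi)-\widetilde u(x)}{\e}\Bigr)^2d\xi\,dx.
\]
\emph{(iii)} Bound the right-hand side by $C F_\e(u)$ using the energy estimate for the extension from step (i) together with the lower bound \eqref{ass-aa} on $a$ (so that the unweighted difference quotients controlling the extension's energy are themselves controlled by $F_\e(u)=F_\e^\omega(u)$ restricted to the perforation). \emph{(iv)} Finally, pass from the $L^2$ oscillation of $\widetilde u$ over $\Domain(k\e/2)$ to that of $u$ over $\Domain(k\e)\cap\e E$: since $\Domain(k\e)\cap\e E\subset\Domain(k\e/2)$ and $\widetilde u=u$ there, we have $\int_{\Domain(k\e)\cap\e E}(u-c)^2\le\int_{\Domain(k\e/2)}(\widetilde u-c)^2$ for any constant $c$, and the constant minimizing the left-hand side is $u_{\{\Domain(k\e)\cap\e E\}}$, which only makes the left-hand side smaller; taking $c=\widetilde u_{\Domain(k\e/2)}$ gives \eqref{poi_modif}. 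One must check that the constant $C$ stays independent of $\e$, which follows from the $\e$-independence of the constant in Theorem~\ref{t_poin} and the scale-invariant (cellwise) nature of the extension bound.

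The main obstacle I anticipate is controlling the extension step uniformly in $\e$: one needs that extending $u$ across the perforations costs at most a fixed multiple of the energy, with a constant depending only on the geometry of $E^\omega$ (connectedness, the uniform Lipschitz/John-type regularity of the perforated cells) and not on $\e$ or the realization $\omega$. This is exactly the content packaged into the notion of a coercive energy function (Definition~\ref{coerciveness}) and into Theorem~\ref{t_ext}, so the real work is bookkeeping: matching the interaction radius of the extension estimate ($|x-y|<1$ in \eqref{def-Fe-1}, i.e.\ $|\xi|<1/\e$ unrescaled) with the radius $r_0$ needed for Theorem~\ref{t_poin}, and handling the boundary layer of width $\sim k\e$ where $\Domain(k\e)$ and $\Domain$ differ. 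A secondary technical point is ensuring the reference measure $|\Domain(k\e)\cap\e E|$ is bounded below by a fixed fraction of $|\Domain(k\e/2)|$ uniformly in $\e$ and $\omega$, which again follows from stationarity and ergodicity of the perforation (positive volume density of $E^\omega$), so that replacing the average over $\Domain(k\e/2)$ by the average over the perforated set does not deteriorate the constant.
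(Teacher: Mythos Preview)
Your proposal is correct and follows the same route the paper intends: the corollary is stated without an explicit proof precisely because it is meant to be read off from Theorem~\ref{t_poin} combined with the extension Theorem~\ref{t_ext}, exactly via the steps (i)--(iv) you outline. A minor simplification: applying Theorem~\ref{t_poin} directly on $\Domain(k\e)$ (rather than $\Domain(k\e/2)$) matches the domain in the extension estimate \eqref{dva}, and your step~(iv) minimization argument already makes the volume-fraction concern you raise at the end unnecessary.
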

\begin{proof}[Proof of Theorem {\rm\ref{t_poin}}] We set
$$
F_\e^0(r,v)=\int\limits_{\Domain}\int_
{\{\xi:|\xi|\le r,
x+\e\xi\in\Domain\}}
{}
\Bigl({v(x+\e\xi)-v(x)\over\e}\Bigr)^2d\xi \dx
$$
and
$$
F^1(G_1,G_2,v)=\int_{G_1}\int_{G_2}
\bigl({v(x)-v(y)}\bigr)^2 dx\,dy.
$$

In what follows the notation $\Domain^\e$ is used for $\frac1\e\Domain$.
\smallskip

We first consider the case when $\Domain$ is a cube,  $\Domain=(-\frac L2,\frac L2)^d$, and $r$ is a sufficiently large number, say $r\geq3\sqrt{d}$.  We also assume that $L/\e$ is an integer number.

Denote $\mathcal{S}^\e=\{j\in\mathbb Z^d\,:\,j+[-\frac12,\frac12]^d\}\cap\Domain^\e\not=\emptyset$.
For any $i\in\mathcal{S}^\e $ and $j\in \mathcal{S}^\e$ construct a path $\boldsymbol{\gamma}(i,j)=\{j_k\}_{k=1}^N$ in $\mathbb Z^d$
such that $j_1=i$, $j_N=j$, $|j_k-j_{k+1}|=1$.  The path is constructed in such a way that it starts along
the first coordinate direction until the first coordinate of $j_k$ coincides with the first coordinate of $j$,
then it follows the second coordinate direction and so on.
We then have
\begin{itemize}
\item[{\it i.}]
the length of each path is not greater than $d\frac L\e$,
\item[{\it ii.}]  For each $j\in\mathcal{S}^\e$ the total number of paths $\{\gamma(i,l)\,:\,i,\,l\in\mathcal{S}^\e\}$
that pass through $j$ is not greater than $\big(\frac{L}{\e}\big)^{d+1}$:
\begin{equation}\label{path_num}
\#\big\{\boldsymbol{\gamma}(i,l)\,:\,i,\,l\in\mathcal{S}^\e,\ j\in\boldsymbol{\gamma}(i,l)\big\}\leq \Big(\frac{L}{\e}\Big)^{d+1}.
\end{equation}
\end{itemize}
For any $j\in \mathcal{S}^\e$ denote $Q_j=\e j+\e[-\frac12,\frac12]^d$.
For $i$ and $j$ in $\mathcal{S}^\e$  the ``interaction energy of the cubes $Q_i$ and $Q_j$'' can be estimated as follows.
We consider a path $\boldsymbol{\gamma}(i,j)$, denote the length of this path by $N$ and its elements by $\gamma_1,
\,\gamma_2,\ldots,\gamma_N$,
 and introduce the variables $\eta_2,\ldots,\eta_{N-1}$,  $\eta_k\in Q_0$.  Then we have
\begin{eqnarray*}&&
 \int_{\e Q_i}\int_{Q_j}\Big(\frac{u(x)-u(\e\xi)}{\e}\Big)^2\,d\xi dx\\
 &=&
 \e^{d-2}\int_{Q_0}\int_{Q_0}(u(\e\gamma_1+\e\eta_1)-u(\e\gamma_N+\e\eta_N))^2\,d\eta_1 d\eta_N
\\
&=&\e^{d-2}\int_{Q_0}\ldots\int_{Q_0}(u(\e\gamma_1+\e\eta_1)-u(\e\gamma_2+\e\eta_2)
+u(\e\gamma_2+\e\eta_2)-\ldots\\
&&\qquad\qquad\qquad\qquad\qquad\qquad\qquad -u(\e\gamma_N+\e\eta_N))^2\,d\eta_1d\eta_2\ldots d\eta_N
\\
&\leq& N\e^{d-2}\sum_{i-1}^{N-1}\int_{Q_0}\int_{Q_0}
(u(\e\gamma_i+\e\eta_i)-u(\e\gamma_{i+1}+\e\eta_{i+1})^2d\eta_id\eta_{i+1}\\
&\leq& (Ld)\e^{d-3}\sum_{i=1}^{N-1}\int_{Q_0}\int_{Q_0}
(u(\e\gamma_i+\e\xi)-u(\e\gamma_{i+1}+\e\eta)^2d\xi d\eta
\\
&\leq& (Ld)\e^{-3}\sum_{i=1}^{N-1}\int_{\e Q_0}\int_
{\{\xi: x+\e\xi\in\Domain,|\xi|<r\}}
(u(\e\gamma_i+x)-u(\e\gamma_i+x+\e\xi)^2dx d\xi.
\end{eqnarray*}
Considering  \eqref{path_num} we deduce from the last inequality that
\begin{eqnarray*}
&&\int_\Domain\int_\Domain(u(x)-u(y))^2dx\,dy\\
&=&\sum_{i,\,l\in\mathcal{S}^\e}\e^{d+2}
 \int_{\e Q_i}\int_{Q_l}\Big(\frac{u(x)-u(\e\xi)}{\e}\Big)^2\,d\xi dx
\\
&\leq&(Ld)\e^{d-1}\Big(\frac L\e\Big)^{d+1}\sum_{j\in \mathcal{S}^\e}\int_{x\in\e Q_0}\int_
{\{\xi: x+\e\xi\in\Domain,|\xi|<r\}}
(u(\e j+x)-u(\e j+x+\e\xi))^2dx d\xi
\\
&\leq& L^{d+2}d \int_{x\in\Domain}\int_
{\{\xi: x+\e\xi\in\Domain,|\xi|<r\}}
\Big(\frac{u(x)-u(x+\e\xi}\e\Big)^2dx d\xi.
\end{eqnarray*}
Since
$$
\int_\Domain\int_\Domain (u(x)-u(y))^2dx\,dy=2\int_\Domain(u(x)-u_\Domain)^2dx,
$$
this yields the desires inequality in the case of a cubic domain.

The case of an arbitrary $r>0$ and $L>0$ can be reduced to the one just studied by standard
scaling arguments.

If $\Domain$ is a strongly star-shaped domain, then there exists a cube ${\bf B}$ and a Lipschitz isomorphism $J:\Domain\mapsto \mathbf{B}$ such that $|J(x)-J(y)|\leq \ell|x-y|$,  \, $\Big|\frac{\partial J}{\partial x}\Big|\leq \ell$,
$\Big|\Big(\frac{\partial J}{\partial x}\Big)^{-1}\Big|\leq \ell$ for some $\ell>0$.
For an arbitrary  $u\in L^2(\Domain)$ denote $u_J(x)=u(J^{-1}(x))$ and $u_{{\bf B},J}=\int_{\bf B}u_J(x)dx$.
Also, we set $r_1=r/\ell$.
Since the desired inequality has been proved for cubic domains,  we have
\begin{eqnarray*}
&&\int_\Domain\int_\Domain(u(x)-u(y))^2\,dx\,dy\\
&=&\int_{\bf B}\int_{\bf B}(u_J(x)-u_J(y))^2\Big|\frac{\partial J^{-1}}{\partial x}(x)\Big|\,
\Big|\frac{\partial J^{-1}}{\partial x}(y)\Big|\,dx\,dy
\\
&\leq& \ell^2 \int_{\bf B}\int_{\bf B}(u_J(x)-u_J(y))^2\,dx\,dy
\\
&\leq&  C\e^{-d}\ell^2\int_{\bf B}
\int_{\{y\in{\bf B}: |y-x|<\e r_1 \}}
\Big(\frac{u_J(x)-u_J(y)}\e\Big)^2\,dy\,dx
\\
&\leq& C\e^{-d}\ell^2\int_{ \Domain}
\int_{\{\xi: x+\e\xi\in\Domain,|\xi|<r\}}
\Big(\frac{u(x)-u(y)}\e\Big)^2\Big|\frac{\partial J}{\partial x}(x)\Big|\,
\Big|\frac{\partial J}{\partial x}(y)\Big|\,dy\,dx
\\
&\leq&  C\e^{-d}\ell^4\int_{ \Domain}
\int_{\{\xi: x+\e\xi\in\Domain,|\xi|<r\}}
\Big(\frac{u(x)-u(y)}\e\Big)^2\,dy\,dx,
\end{eqnarray*}
where the constant $C$ depends only on the size of ${\bf B}$, $r_1$ and $d$.

\smallskip
It remains to consider an arbitrary bounded Lipschitz set $\Domain$. Such a set can be represented
as a union of a finite number of strongly star shaped domains, we denote these domains $\Domain_1,\ldots,\Domain_N$.

We first consider the case $N=2$, we denote by $\widetilde{\bf B}$ a cube such that $\widetilde{\bf B}\subset\Domain$,
$|\widetilde{\bf B}\cup\Domain_1|\geq\frac12 |\widetilde{\bf B}|$,
$|\widetilde{\bf B}\cup\Domain_2|\geq\frac12 |\widetilde{\bf B}|$. Notice that
$|\widetilde{\bf B}\cup\Domain_1|
=|\widetilde{\bf B}\cup\Domain_2|=\frac12 |\widetilde{\bf B}|$ if the interiors of $\Domain_1$ and $\Domain_2$ do not
intersect.
In the rest of the proof the symbols  $\widetilde{\bf B}_1$ and $\widetilde{\bf B}_2$  stand for $\widetilde{\bf B}\cup\Domain_1$ and
$\widetilde{\bf B}\cup\Domain_2$, respectively.

If we denote
$$
\overline{u}_k=\frac1{|\Domain_k|}\int\limits_{\Domain_k}u(x)\,dx,\ \  k=1,2;\quad\overline{u}_{0,k}=\frac1{|\widetilde{\bf B}_k|}\int\limits_{\widetilde{\bf B}_k}u(x)\,dx,\ \  k=1,2;\quad
\overline{u}_{0}=\frac1{|\widetilde{\bf B}|}\int\limits_{\widetilde{\bf B}}u(x)\,dx
$$
then
\begin{eqnarray*}
(\overline{u}_1-\overline{u}_{0,1})^2&=&\Big(\frac1{|\widetilde{\bf B}_1|\,|\Domain_1|}\int_{\widetilde{\bf B}_1}
\int_{\Domain_1}u(x)\,dx\,dy-
\frac1{|\widetilde{\bf B}_1|\,|\Domain_1|}\int_{\widetilde{\bf B}_1}\int_{\Domain_1}u(y)\,dx\,dy\Big)^2
\\
&\leq& \frac1{|\widetilde{\bf B}_1|\,|\Domain_1|}\int_{\widetilde{\bf B}_1}\int_{\Domain_1}
(u(x)-u(y))^2\,dx\,dy\\
&\leq& \frac1{|\widetilde{\bf B}_1|\,|\Domain_1|}\int_{\Domain_1}
\int_{\Domain_1}(u(x)-u(y))^2\,dx\,dy\
\\
&\leq&
C\e^{-d}\int_{ \Domain_1}
\int_{\{y\in{\Domain_1}: |y-x|<\e r \}}
\Big(\frac{u(x)-u(y)}\e\Big)^2\,dy\,dx\\
&\leq& C\e^{-d}\int_{ \Domain}
\int_{\{ y\in{\Domain}: |y-x|<\e r\}}
\Big(\frac{u(x)-u(y)}\e\Big)^2\,dy\,dx;
\end{eqnarray*}
here we have used inequality \eqref{poin_solid} in $\Domain_1$ that holds because $\Domain_1$ is a strongly star shaped domain. In the same way we prove that
$$
(\overline{u}_{0,1}-\overline{u}_{0,2})^2
\leq C\e^{-d}\int_{ \Domain}
\int_{\{ y\in{\Domain}: |y-x|<\e r \}}
\Big(\frac{u(x)-u(y)}\e\Big)^2\,dy\,dx,
$$
and
$$
(\overline{u}_{0,2}-\overline{u}_{2})^2
\leq C\e^{-d}\int_{ \Domain}
\int_{\{ y\in{\Domain}: |y-x|<\e r \}}
\Big(\frac{u(x)-u(y)}\e\Big)^2\,dy\,dx.
$$
Therefore,
$$
(\overline{u}_{1}-\overline{u}_{2})^2
\leq C\e^{-d}\int_{ \Domain}
\int_{\{ y\in{\Domain}: |y-x|<\e r \}}
\Big(\frac{u(x)-u(y)}\e\Big)^2\,dy\,dx.
$$
Since $u_\Domain\in\big(\overline{u}_{1},\overline{u}_{2}\big)$,  the last inequality yields
\begin{eqnarray*}
\int_\Domain(u(x)-u_\Domain)^2\,dx
&\leq& \sum_{k=1}^2 \
\Big(2\int_{\Domain_k}(u(x)-\overline{u}_{k})^2\,dx+2|\Domain_k|(\overline{u}_{k}-u_\Domain)^2\Big)
\\
&\leq& 2\sum_{k=1}^2\int_{\Domain_k}(u(x)-\overline{u}_{k})^2\,dx+
2|\Domain|(\overline{u}_{1}-\overline{u}_2)^2\\
&\leq& C\e^{-d}\int_{ \Domain}
\int_{\{ y\in{\Domain}: |y-x|<\e r \}}
\Big(\frac{u(x)-u(y)}\e\Big)^2\,dy\,dx.
\end{eqnarray*}

The case $N>2$ can be achieved by induction.
\end{proof}

We next consider functions with given boundary data.
\begin{lemma}[Poincar\'e inequality]
Let $\Domain$ be a bounded set and let $u\in L^2(\Domain)$ be such that $u=0$ on a $2\e$-neighbourhood of $\partial \Domain$
(and extended to $0$ outside $\Domain$). Then there exists a constant $C$ depending only on the diameter of $\Domain$ such
that
\begin{equation}
\int_\Domain|u(x)|^2\dx\le C{1\over \e^{d+2}}\int_\Domain\int_{\{|\xi|\le \e\}}(u(x+\xi)- u(x))^2 d\xi dx\,.
\end{equation}
\end{lemma}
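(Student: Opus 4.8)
The plan is to run a telescoping (chain) argument, exploiting that $u$, extended by zero, vanishes not merely outside $\Domain$ but on a full $2\e$-collar of $\partial\Domain$. Write $R=\mathrm{diam}(\Domain)$; we may assume $4\e\le R$, since otherwise every point of $\Domain$ lies within $2\e$ of $\partial\Domain$, so $u\equiv 0$ and there is nothing to prove.

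The first step is an elementary remark that lets us pass freely between $\int_\Domain$ and $\int_{\rr^d}$ on the right-hand side. If $x\in\rr^d$, $|\xi|\le\e$ and at least one of the points $x$, $x+\xi$ lies outside $\Domain$, then that point contributes $0$ through the zero extension, while the other point — being at distance at most $|\xi|\le\e\le 2\e$ from $\partial\Domain$ — also makes $u$ vanish by hypothesis; hence $(u(x+\xi)-u(x))^2=0$ for every such pair. By Fubini,
\[
\int_{\Domain}\int_{\{|\xi|\le\e\}}(u(x+\xi)-u(x))^2\,d\xi\,dx=\int_{\rr^d}\int_{\{|\xi|\le\e\}}(u(x+\xi)-u(x))^2\,d\xi\,dx .
\]

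Next I would fix a coordinate direction $e_1$ and the cap of admissible increments $A=\{\xi\in\rr^d:|\xi|\le\e,\ \xi_1\ge\e/2\}$, so that $|A|=c_d\,\e^d$ with $c_d>0$ a purely dimensional constant. Pick $N\in\NN$ minimal with $N\e/2>R$, hence $N\le 3R/\e$. For $x\in\Domain$ and $\vec\xi=(\xi^{(1)},\dots,\xi^{(N)})\in A^N$ put $S_0=0$, $S_j=\xi^{(1)}+\dots+\xi^{(j)}$; since $(S_N)_1\ge N\e/2>R=\mathrm{diam}(\Domain)$ we must have $x+S_N\notin\Domain$, so $u(x+S_N)=0$. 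Telescoping along $S_0,\dots,S_N$ and using Cauchy--Schwarz,
\[
u(x)^2=\Bigl(\sum_{j=1}^{N}\bigl(u(x+S_{j-1})-u(x+S_j)\bigr)\Bigr)^2\le N\sum_{j=1}^{N}\bigl(u(x+S_j)-u(x+S_{j-1})\bigr)^2 .
\]
I would then integrate over $x\in\Domain$, enlarge that integral to $\rr^d$, and average over $\vec\xi\in A^N$. For fixed $\vec\xi$ the translation $x\mapsto x-S_{j-1}$ turns the $j$-th summand into $\int_{\rr^d}(u(x+\xi^{(j)})-u(x))^2\,dx$, which depends on $\vec\xi$ only through $\xi^{(j)}$; integrating out the remaining $N-1$ variables over $A^{N-1}$ therefore yields
\[
\int_{\Domain}u(x)^2\,dx\le\frac{N^2}{|A|}\int_{A}\int_{\rr^d}(u(x+\xi)-u(x))^2\,dx\,d\xi\le\frac{N^2}{c_d\,\e^d}\int_{\rr^d}\int_{\{|\xi|\le\e\}}(u(x+\xi)-u(x))^2\,d\xi\,dx .
\]
Combining with the first step and with $N^2\le 9R^2/\e^2$ gives the claimed inequality, with $C=9R^2/c_d$ depending only on $\mathrm{diam}(\Domain)$ (and $d$).

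The only point requiring real care is the bookkeeping in the chain: the cap $A$ must be chosen so that each increment is a genuine admissible vector of length $\le\e$ while still sweeping a set of $d$-dimensional measure of order $\e^d$, and yet the chain leaves $\Domain$ after only $O(R/\e)$ steps. This balance — $N^2$ from the chain against $|A|^{-1}\sim\e^{-d}$ from the averaging — is exactly what produces the scaling $\e^{-(d+2)}$; everything else is Fubini and translation invariance.
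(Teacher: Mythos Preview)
Your proof is correct. The telescoping along chains with increments in the cap $A=\{|\xi|\le\e,\ \xi_1\ge\e/2\}$ is sound: the collar hypothesis lets you pass from $\int_\Domain$ to $\int_{\rr^d}$ on the right-hand side, the cap guarantees $x+S_N\notin\Domain$ after $N=O(R/\e)$ steps, and the Fubini/translation-invariance step that reduces each summand to $\frac{1}{|A|}\int_A\int_{\rr^d}(u(x+\xi)-u(x))^2\,dx\,d\xi$ is exactly right. The balance $N^2/|A|\sim R^2\e^{-(d+2)}$ gives the stated constant.

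Your route differs from the paper's. The paper reduces to the one-dimensional case $\Domain=(0,1)$ by slicing in a coordinate direction, and in 1D runs a recursive estimate on consecutive intervals $(k\e,(k+1)\e)$: it shows
\[
\e\Bigl(\textstyle\sqrt{\int_{k\e-\e}^{k\e}|u|^2}-\sqrt{\int_{k\e}^{k\e+\e}|u|^2}\Bigr)^2\le\displaystyle\int_{k\e-\e}^{k\e+\e}\int_{x-2\e}^{x+2\e}(u(y)-u(x))^2\,dy\,dx,
\]
starts the recursion at $k=0$ using the boundary vanishing, and sums. Your argument is a direct $d$-dimensional chain-and-average, avoiding both the slicing reduction and the square-root recursion; it makes the scaling $\e^{-(d+2)}$ emerge transparently as $N^2/|A|$. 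The paper's approach, on the other hand, fits the slicing toolkit already used elsewhere in the compactness section and keeps all estimates one-dimensional. Both yield a constant of order $({\rm diam}\,\Domain)^2$.
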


\begin{proof}
It suffices to treat the case $d=1$ and $\Domain=(0,1)$, the general case being recovered from this one by considering one-dimensional stripes.
For notational convenience we replace $\e$ by $2\e$, so that our claim becomes that
\begin{equation}\label{po-one}
\int_0^1|u(x)|^2\dx\le C{1\over \e^{3}}\int_{-\infty}^{+\infty}\int_{x-2\e}^{x+2\e}(u(y)- u(x))^2 dy\dx\,,
\end{equation}
keeping in mind that the first integral in the right-hand side is indeed restricted to $(0,1)$.

For all $k\in\NN$ we note that, since
$$
(x-2\e,x+2\e)\supset (k\e-\e,k\e+\e) \hbox{ if } x\in (k\e-\e,k\e+\e),
$$
we have
\begin{eqnarray}\nonumber
&&\int_{k\e-\e}^{k\e+\e}\int_{x-2\e}^{x+2\e}(u(y)- u(x))^2 dy\dx\\
&\ge&\nonumber
\int_{k\e-\e}^{k\e+\e}\int_{k\e-\e}^{k\e+\e}(u(y)- u(x))^2 dy\dx\\
&\ge&\nonumber
\int_{k\e-\e}^{k\e}\int_{k\e}^{k\e+\e}(u(y)- u(x))^2 dy\dx\\
&=&\nonumber
\e\int_{k\e-\e}^{k\e}|u(x)|^2\dx-2\int_{k\e-\e}^{k\e}u(x)\dx
\int_{k\e}^{k\e+\e}u(y) dy+ \e \int_{k\e}^{k\e+\e}|u(y)|^2 dy
\\
&=&\nonumber
\e\Bigl(\int_{k\e-\e}^{k\e}|u(x)|^2\dx-2\sqrt{\int_{k\e-\e}^{k\e}|u(x)|^2\dx}
\sqrt{\int_{k\e}^{k\e+\e}|u(y)|^2 dy}+ \int_{k\e}^{k\e+\e}|u(y)|^2 dy\Bigr)
\\
&=&
\e\Biggl(\sqrt{\int_{k\e-\e}^{k\e}|u(x)|^2\dx}-
\sqrt{\int_{k\e}^{k\e+\e}|u(y)|^2 dy}\Biggr)^2.
\end{eqnarray}

Note that for $k=0$ this gives
\begin{eqnarray}\nonumber
\int_{0}^{\e}|u(y)|^2\,dy\le{1\over\e}\int_{-\e}^{\e}\int_{x-2\e}^{x+2\e}(u(y)- u(x))^2 dy\,dy\,
\end{eqnarray}
By a recursive argument from $k=0$ we deduce that
\begin{eqnarray}\nonumber
\int_{k\e}^{k\e+\e}|u(y)|^2\,dy&\le&{1\over\e}\Biggl(\sum_{j=0}^k\sqrt{\int_{j\e-\e}^{j\e+\e}\int_{x-2\e}^{x+2\e}(u(y)- u(x))^2 dy\,dx}\Biggr)^2\\ \nonumber
&\le&{1\over\e^2}\sum_{j=0}^k\int_{j\e-\e}^{j\e+\e}\int_{x-2\e}^{x+2\e}(u(y)- u(x))^2 dy\,dx
\\ \nonumber
&\le&{2\over\e^2}\int_{-\infty}^{+\infty}\int_{x-2\e}^{x+2\e}(u(y)- u(x))^2 dy\,dx,
\end{eqnarray}
where the factor $2$ takes into account that the intervals $({j\e-\e},{j\e+\e})$ overlap for consecutive values of $j$.
Noting that indeed the term with $k=0$ is $0$ by our assumptions on the values of $u$ close to the boundary,
it suffices now to sum up the contribution over all $k\in\{1,\ldots, \lfloor1/\e\rfloor\}$ to obtain
\begin{eqnarray}\nonumber
\int_{0}^{1}|u(y)|^2\,dy
&\le&2{\lfloor1/\e\rfloor\over\e^2}\int_{-\infty}^{+\infty}\int_{x-2\e}^{x+2\e}(u(y)- u(x))^2 dy\,dx,
\end{eqnarray}
which gives \eqref{po-one} with $C=2$.
Note that if the interval $(0,1)$ is substituted by any interval then we can take $C$ as twice the length of the interval.
\end{proof}

\section{Definition of the homogenized energy density}\label{s_hom_formula}

Let $b$ be as in Section \ref{setting_sec}.
For all $K\in\NN$ we set
\begin{equation}
b^\omega_K(x,y)=\begin{cases} b^\omega(x,y) & \hbox{ if }|x-y|<K\cr
0 & \hbox{otherwise,}\end{cases}
\end{equation}
and, for $z\in \rr^d$, $U$ open subset of $\rr^d$, and $K\in\NN$ we define
\begin{equation}\label{MKzU}
{\cal M}^\omega_K(z,U)=\inf\Bigl\{\int_U\int_{\rr^d}b^\omega_K(x,y)(v(x)-v(y))^2dx\,dy:
v(x)=\langle z,x\rangle \hbox{ if  dist}(x,\partial U)<K\Bigr\}.
\end{equation}
Note that, using $v(x)=\langle z,x\rangle$ as a test function, we get
\begin{equation}\label{MKzU-estimate}
{\cal M}^\omega_K(z,x+Q_R)\le C R^d|z|^2
\end{equation}
for all $x$ and $R$.

\begin{lemma}\label{lemma-one}
For all $K$ and $z$ the limit
\begin{equation}\label{limMKzU}
\gamma_K(z)=\lim_{R\to+\infty}{{\cal M}^\omega_K(z,Q_R)\over R^d}
\end{equation}
exists almost surely, it is independent of $\omega$, and $K\mapsto \gamma_K(z)$ is an increasing function.
Moreover, there exists an increasing function $f_K$ with
$$
\lim_{R\to+\infty} f_K(R)=+\infty
$$
such that
\begin{equation}\label{limMKzUbis}
\gamma_K(z)=\lim_{R\to+\infty}{{\cal M}^\omega_K(z,x_R+Q_R)\over R^d}
\end{equation}
for all $\{x_R\}$ such that $|x_R|\le R f_K(R)$.
\end{lemma}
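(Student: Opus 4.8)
The plan is to derive both statements from the Ackoglu–Krengel–type subadditive ergodic theorem (in its almost-subadditive form from \cite{KrPy}). First I would check that, for fixed $K$ and $z$, the set function $A\mapsto {\cal M}^\omega_K(z,A)$, defined on cubes (or finite unions of translated cubes), is \emph{subadditive up to a boundary error}: if $A=\bigcup_i A_i$ with the $A_i$ essentially disjoint translated cubes, then gluing together the optimal (or near-optimal) test functions $v_i$ on each $A_i$ — each of which already equals $\langle z,\cdot\rangle$ on a $K$-neighbourhood of $\partial A_i$ — produces an admissible competitor for ${\cal M}^\omega_K(z,A)$, since all interactions cross distances less than $K$ and near the interfaces every involved function coincides with $\langle z,\cdot\rangle$. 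The only discrepancy is the interaction energy concentrated in a $K$-collar of the interfaces, which is controlled by \eqref{MKzU-estimate} applied on thin slabs and is therefore $o(|A|)$ in the relevant regime; symmetrically one gets superadditivity with a comparable error. Stationarity of the process follows from the covariance assumption \eqref{def_b}: ${\cal M}^{\tau_h\omega}_K(z,A)={\cal M}^\omega_K(z,A+h)$, by the obvious change of variables in the definition \eqref{MKzU}. Ergodicity of $\tau_x$ then gives that the limit \eqref{limMKzU} exists almost surely and is the deterministic constant obtained by taking expectations, which I define to be $\gamma_K(z)$. Monotonicity $K\mapsto\gamma_K(z)$ is immediate since $b^\omega_K\le b^\omega_{K'}$ pointwise for $K\le K'$, hence ${\cal M}^\omega_K(z,Q_R)\le{\cal M}^\omega_{K'}(z,Q_R)$ for every $R$.

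For the second, stronger statement about shifted cubes $x_R+Q_R$, the idea is to compare ${\cal M}^\omega_K(z,x_R+Q_R)$ with ${\cal M}^\omega_K(z,Q_{R'})$ for a large surrounding cube $Q_{R'}$ with $R'\sim R\,f_K(R)$, so that $x_R+Q_R\subset Q_{R'}$. On one hand, a near-optimal test function for the big cube restricts to an admissible competitor on the small shifted one (after a harmless modification on the $K$-collar, again paid for by \eqref{MKzU-estimate}), giving ${\cal M}^\omega_K(z,x_R+Q_R)\le {\cal M}^\omega_K(z,Q_{R'}) + C(R')^{d-1}K\cdot(\text{collar width})$; on the other hand, tiling $Q_{R'}$ by translates of $Q_R$ — one of which is $x_R+Q_R$ — and using the (almost-)subadditivity in reverse, together with the uniform upper bound \eqref{MKzU-estimate} on the other $\approx (R'/R)^d$ tiles and on the interface collars, one sandwiches ${\cal M}^\omega_K(z,x_R+Q_R)/R^d$ between quantities that both converge to $\gamma_K(z)$. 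The key point is a \emph{quantitative} version of the basic convergence: a maximal/Borel–Cantelli argument shows that, along a geometric sequence $R_n=\rho^n$, the deviation $|{\cal M}^\omega_K(z,Q_{R_n})/R_n^d-\gamma_K(z)|$ is summably small almost surely, and then interpolation between consecutive scales and a union bound over the $O(f_K(R)^d)$ possible positions of $x_R$ inside $Q_{R'}$ forces the choice of the divergence rate $f_K$ — it must grow slowly enough that $f_K(R)^d$ times the per-scale error still tends to $0$. Choosing $f_K$ increasing with $f_K(R)\to+\infty$ but slowly is then a matter of diagonalizing against whatever convergence rate the subadditive theorem (or the explicit maximal estimate) provides.

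The main obstacle I anticipate is making the "shifted-cube" estimate \eqref{limMKzUbis} genuinely \emph{uniform} over all admissible $x_R$ simultaneously, rather than for each fixed sequence $x_R$: one cannot apply the pointwise ergodic statement separately to uncountably many shifts. This is handled by the covering/union-bound step together with a quantitative rate, which is exactly why the statement is phrased with a gauge function $f_K$ whose growth is constrained. A secondary technical nuisance is bookkeeping the collar errors so that they are genuinely $o(R^d)$ and not just $O(R^{d-1}\cdot\text{width})$ with a width that competes with $R$; since the collar width is the fixed constant $K$, these errors are $O(K R^{d-1})=o(R^d)$, so this is routine but must be written carefully, in particular when several interfaces from a tiling of $Q_{R'}$ accumulate. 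All of this uses only the upper bound \eqref{MKzU-estimate} and stationarity, so no coerciveness (Definition \ref{coerciveness}) is needed for this lemma.
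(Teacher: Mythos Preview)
Your plan for the first part is essentially correct and matches the paper, with one simplification you are missing: the process $U\mapsto {\cal M}^\omega_K(z,U)$ is \emph{exactly} subadditive, not merely almost-subadditive. Because the boundary condition forces every competitor to equal $\langle z,\cdot\rangle$ on the full $K$-collar of $\partial U$, and because $b^\omega_K$ vanishes for $|x-y|\ge K$, gluing competitors for disjoint $U_1,U_2$ produces an admissible competitor for $U_1\cup U_2$ whose energy is \emph{exactly} the sum of the two energies --- there is no interface error to track. (Your claim of a symmetric ``superadditivity with a comparable error'' is not correct, but it is also not needed.) Stationarity, ergodicity, and the monotonicity in $K$ are as you say.

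The real divergence from the paper --- and a genuine gap --- is in your treatment of the shifted cubes \eqref{limMKzUbis}. Your sandwich argument does not work: restricting a near-optimal competitor for a large cube $Q_{R'}$ to the small cube $x_R+Q_R$ does \emph{not} give an admissible competitor there (the boundary condition near $\partial(x_R+Q_R)$ fails), and even after cutting off, the energy on the small cube of the big-cube minimiser is not controlled by $\gamma_K(z)R^d$. In the other direction, subadditivity plus the crude bound \eqref{MKzU-estimate} on the remaining $(R'/R)^d-1$ tiles gives a lower bound of order $(R')^d(\gamma_K(z)-C|z|^2)$, which is useless. Patching this with ``quantitative rates via Borel--Cantelli'' presupposes a rate of convergence in the subadditive ergodic theorem that you do not have under bare ergodicity. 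The paper avoids all of this: the theorem invoked from \cite{KrPy} is precisely a \emph{uniform} pointwise subadditive ergodic theorem, whose conclusion is that for every fixed $N$, almost surely
\[
\lim_{R\to\infty}\ \sup_{|x|\le N}\Bigl|{ {\cal M}^\omega_K(z,R(x+Q))\over R^d}-\gamma_K(z)\Bigr|=0.
\]
From this the existence of a slowly growing $f_K(R)\to\infty$ follows by a one-line diagonal argument (choose $N=N(R)\to\infty$ slowly enough that the supremum still vanishes). So the ``main obstacle'' you correctly identify is resolved not by your covering/union-bound machinery but by the very statement of the cited theorem.
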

\begin{proof}
Our arguments rely on a uniform version of  the sub-additive ergodic theorem, see \cite[Theorem 1]{KrPy}.
For any $j\in \mathbb Z^{d,+}=\{0,\,1,\,2,\ldots\}^d$ we define $Q^j=j+\bar{\frac12}+Q$, where $\bar{\frac12}$
is the vector $(\frac12,\frac12,\ldots,\frac12)$.
For any finite subset $\mathcal{A}$ of $\mathbb Z^{d,+}$ denote $Q^{\mathcal{A}}=\bigcup_{j\in \mathcal{A}}
Q^j$, and $\Phi_K(z,\mathcal{A})=\mathcal{M}_K^\omega(z,Q^{\mathcal{A}})$.

From definition \eqref{MKzU} for any non-intersecting finite sets  $\mathcal{A}$ and $\mathcal{B}$ we have
$$
\Phi_K(z,\mathcal{A}\cup\mathcal{B})\leq \Phi_K(z,\mathcal{A})+\Phi_K(z,\mathcal{B}).
$$
Since $b_K^\omega(x,y)$ is statistically homogeneous, the family $\{ \Phi_K(z,\mathcal{A})\}$ is stationary;
that is, for any $j\in\mathbb Z^{d,+}$ and any finite collection $\mathcal{A}_1,\ldots, \mathcal{A}_N$
the joint law of   $\{ \Phi_K(z,\mathcal{A}_1+j),\ldots,  \Phi_K(z,\mathcal{A}_N+j)\}$ is the same as the joint law
of $\{ \Phi_K(z,\mathcal{A}_1),\ldots,  \Phi_K(z,\mathcal{A}_N)\}$.
Then according to Theorem 1 in \cite{KrPy} there exists $\gamma_K(z)$ such that for any $N>0$ we have
\begin{equation}\label{ave_bir}
  \lim\limits_{R\to\infty}\sup\Big\{\Big|{{\cal M}^\omega_K\big(z,R(x+Q)\big)\over R^d}-\gamma_K(z)\Big|: {|x|\leq N} \Big\}=0
\end{equation}
 almost surely.  This implies \eqref{limMKzU}; moreover, since $b^\omega>0$, $K\mapsto \gamma_K(z)$ is an increasing function.

Note that we can choose a (slowly growing) sequence $N=N^\omega(R)$ such that \eqref{ave_bir} still holds, which yields \eqref{limMKzUbis}.
\end{proof}

\begin{definition}[homogenized energy function]
We define
$$
\gamma(z)=\lim_{K\to+\infty}\gamma_K(z)=\sup_{K>0}\gamma_K(z).
$$
\end{definition}

For $z\in \rr^d$, $U$ open subset of $\rr^d$, and $K\in\NN$ we set
\begin{equation}\label{MtKzU}
\widetilde{\cal M}^\omega_K(z,U)=\inf\Bigl\{\int_U\int_{U}b^\omega(x,y)(v(x)-v(y))^2dx\,dy:
v(x)=\langle z,x\rangle \hbox{ if  dist}(x,\partial U)<K\Bigr\}
\end{equation}

Note that $\widetilde{\cal M}^\omega_K(z,U)$ cannot be directly compared with ${\cal M}^\omega_K(z,U)$ as defined in \eqref{MKzU} since on one side $b^\omega_K\le b^\omega$ while the second integral is performed on $U$ and not $\rr^d$.
However, still using $v(x)=\langle z,x\rangle$ as a test function, we get
\begin{equation}\label{MKtzU-estimate}
\widetilde{\cal M}^\omega_K(z,x+Q_R)\le C R^d|z|^2
\end{equation}
for all $x$ and $R$.

\begin{lemma}\label{lemma-two}
Let $b^\omega$ be coercive.
For all $K$ and $z$ we have
\begin{eqnarray}\label{limMtKzU}
\gamma(z)=\lim_{K\to+\infty}\limsup_{R\to+\infty}{\widetilde{\cal M}^\omega_K(z,Q_R)\over R^d}
=\lim_{K\to+\infty}\liminf_{R\to+\infty}{\widetilde{\cal M}^\omega_K(z,Q_R)\over R^d}
\end{eqnarray}
 almost surely.
\end{lemma}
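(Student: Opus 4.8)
The plan is to sandwich $\widetilde{\cal M}^\omega_K(z,Q_R)$ between two quantities both converging to $\gamma(z)$: from above by comparison with the full-space minimization $\mathcal{M}^\omega_K(z,Q_R)$ (suitably enlarged), and from below by using the coerciveness to replace a near-optimal competitor for $\widetilde{\cal M}$ by one that is affine in a collar of $\partial Q_R$ and can therefore be compared with $\mathcal{M}^\omega_{K'}(z,\cdot)$ for $K'$ slightly larger. The key point to keep in mind is the discrepancy flagged before the statement: in $\mathcal{M}^\omega_K$ the outer integration is over all of $\rr^d$ but the kernel is truncated at range $K$, whereas in $\widetilde{\cal M}^\omega_K$ the outer integration is over $U=Q_R$ only but the kernel is the untruncated $b^\omega$. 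The truncation at range $K$ is what reconciles the two, up to boundary-layer errors of relative size $O(K/R)$.

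\emph{Upper bound.} Fix $K$ and let $v$ be a near-optimal competitor for $\mathcal{M}^\omega_K(z,Q_R)$, so $v(x)=\langle z,x\rangle$ when ${\rm dist}(x,\partial Q_R)<K$; extend $v$ by $\langle z,x\rangle$ outside $Q_R$. Since $v$ is affine outside $Q_R$ and in the $K$-collar, and $b^\omega_K$ vanishes for $|x-y|\ge K$, every interacting pair $(x,y)$ with $x\in Q_R$ and $v(x)\neq v(y)$ lies in the slightly larger region, so $\int_{Q_R}\int_{Q_R}b^\omega(x,y)(v(x)-v(y))^2\,dx\,dy\le \int_{Q_R}\int_{\rr^d}b^\omega_K(x,y)(v(x)-v(y))^2\,dx\,dy+$ (a boundary term where pairs straddle $\partial Q_R$, controlled using \eqref{ass-a} and $v$ affine there, hence of order $K^{d+1}R^{d-1}|z|^2=o(R^d)$). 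Thus $\limsup_{R\to\infty}\widetilde{\cal M}^\omega_K(z,Q_R)/R^d\le\gamma_K(z)\le\gamma(z)$, and letting $K\to\infty$ gives the $\le\gamma(z)$ half of all four expressions in \eqref{limMtKzU}.

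\emph{Lower bound.} Here the coerciveness hypothesis enters: given a near-optimal $v$ for $\widetilde{\cal M}^\omega_K(z,Q_R)$, which is affine only in the $K$-collar, apply Definition \ref{coerciveness} with $U=Q_R$, $\Xi=K$ (legitimate once $K\ge\Xi_0$) to produce $\tilde v$ affine in the $K/2$-collar with $\int_{Q_R\times Q_R}b^\omega(\tilde v(x)-\tilde v(y))^2\le\int_{Q_R\times Q_R}b^\omega(v(x)-v(y))^2$ and, by \eqref{def-Fe-1}, an $L^2$-type bound on $\tilde v$ at range one. Then extend $\tilde v$ by $\langle z,x\rangle$ and drop the long-range part of the kernel: $\int_{Q_R}\int_{\rr^d}b^\omega_{K/2}(x,y)(\tilde v(x)-\tilde v(y))^2\,dx\,dy\le\int_{Q_R}\int_{Q_R}b^\omega(x,y)(\tilde v(x)-\tilde v(y))^2\,dx\,dy+\,$(error from pairs with $x\in Q_R$, $y\notin Q_R$, $|x-y|<K/2$; since $\tilde v$ is affine in the $K/2$-collar and on the outside, and using the tail bound \eqref{ass-a_bis}, this is again $o(R^d)$ uniformly). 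Hence $\gamma_{K/2}(z)\le\liminf_{R\to\infty}\widetilde{\cal M}^\omega_K(z,Q_R)/R^d$; letting $K\to\infty$ (so $K/2\to\infty$) and using $\gamma_{K/2}(z)\to\gamma(z)$ from Lemma \ref{lemma-one} gives $\gamma(z)\le\liminf_{K}\liminf_{R}\widetilde{\cal M}^\omega_K(z,Q_R)/R^d$. Combining the two bounds forces all of the $\liminf$/$\limsup$ over $R$ and the $\lim$ over $K$ to equal $\gamma(z)$, almost surely, since the $R\to\infty$ convergence in Lemma \ref{lemma-one} holds a.s.

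\emph{Main obstacle.} The delicate step is the lower bound: controlling the boundary-layer error when passing from the $Q_R\times Q_R$ double integral to the $Q_R\times\rr^d$ truncated one. One must ensure the competitor $\tilde v$ is genuinely affine (equal to $\langle z,\cdot\rangle$) in a collar of width comparable to the truncation range, which is exactly what the two-scale collar ($\Xi$ versus $\Xi/2$) in Definition \ref{coerciveness} delivers; without coerciveness an arbitrary near-optimal $v$ need not be affine near $\partial Q_R$ and the two functionals cannot be matched. A secondary bookkeeping point is that all error terms must be $o(R^d)$ \emph{uniformly in the realization} $\omega$, which follows from the deterministic tail bound \eqref{ass-a_bis} and the a priori estimates \eqref{MKzU-estimate}, \eqref{MKtzU-estimate}; this is what lets us take $K\to\infty$ after $R\to\infty$ on a single full-probability event.
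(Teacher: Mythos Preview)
Your overall sandwich strategy is right, but the two halves are swapped relative to where the real difficulty lies, and the upper bound as written has a genuine gap.

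In your upper bound you claim
\[
\int_{Q_R}\int_{Q_R}b^\omega(x,y)(v(x)-v(y))^2\,dx\,dy\ \le\ \int_{Q_R}\int_{\rr^d}b^\omega_K(x,y)(v(x)-v(y))^2\,dx\,dy\ +\ o(R^d),
\]
arguing that the discrepancy is a boundary layer where $v$ is affine. But the left-hand side uses the \emph{untruncated} kernel $b^\omega$, so it contains the long-range piece
\[
\int_{\{(x,y)\in Q_R\times Q_R:\ |x-y|>K\}}b^\omega(x,y)(v(x)-v(y))^2\,dx\,dy,
\]
which is simply absent from the right-hand side. This term is not a boundary term: for $x,y$ both at distance $>K$ from $\partial Q_R$ with $|x-y|>K$, the competitor $v$ is not affine and the integrand is not controlled by anything you have written. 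Bounding this tail is precisely the content of Proposition~\ref{prepro1}, which gives
\[
\int_{\{|x-y|>K\}}b^\omega(v(x)-v(y))^2\ \le\ CK^{-\kappa}\int_{\{|x-y|<1\}}(v(x)-v(y))^2,
\]
and to control the short-range energy on the right one then needs \eqref{def-Fe-1} from the coerciveness hypothesis. In the paper this is carried out by starting from a minimizer $u$ of $\mathcal{M}^\omega_{2K}(z,Q_R)$, applying Definition~\ref{coerciveness} with $\Xi=2K$ to produce an admissible $v$ for $\widetilde{\cal M}^\omega_K$ that satisfies \eqref{def-Fe-1}, and then invoking Proposition~\ref{prepro1}; the tail is thereby bounded by $CK^{-\kappa}|z|^2R^d$, which vanishes after dividing by $R^d$ and sending $K\to\infty$.

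Conversely, your lower bound uses coerciveness where none is needed. A near-optimal $v$ for $\widetilde{\cal M}^\omega_K(z,Q_R)$ is, by definition, already affine in the $K$-collar; since $b^\omega_K\le b^\omega$ one has immediately $\overline{\cal M}^\omega_K(z,Q_R)\le\widetilde{\cal M}^\omega_K(z,Q_R)$, and the comparison \eqref{MKzU-3} between $\mathcal{M}^\omega_K$ and $\overline{\cal M}^\omega_K$ (a genuine boundary-layer estimate) gives $\gamma_K(z)\le\liminf_R \widetilde{\cal M}^\omega_K(z,Q_R)/R^d$ without invoking Definition~\ref{coerciveness} at all. So the ``main obstacle'' you identify is not the obstacle; the obstacle is the long-range tail in the upper bound, and that is where coerciveness and Proposition~\ref{prepro1} must be deployed.
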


The proof of this lemma is based on the following proposition.

\begin{proposition}\label{prepro1}
If $U$ is a cube in $\rr^d$ and $v\in L^2(U)$ then we have
\begin{equation}\label{prelemma}
\int_{\{x,y\in U: |x-y|>K\}}b^\omega(x,y) (v(x)-v(y))^2dx\,dy
\le C K^{-\kappa}\int_{\{x,y\in U: |x-y|<1\}} (v(x)-v(y))^2dx\,dy\,,
\end{equation}
with $C$ depending only on the bounds on $b^\omega$ and the dimension $d$.
\end{proposition}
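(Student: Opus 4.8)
The plan is to estimate the left-hand side by splitting the pair-interaction region $\{x,y\in U:\,|x-y|>K\}$ into dyadic (or integer) annuli $\{2^k\le|x-y|<2^{k+1}\}$, $2^k\ge K$, and to bound the contribution of each annulus by the $|x-y|<1$ energy via a telescoping/chaining argument analogous to the one in Proposition~\ref{p_multi} and in the proof of Theorem~\ref{t_poin}. The key input is the upper bound \eqref{ass-a}, namely $b^\omega(x,y)\le C(1+|x-y|)^{-(d+2+\kappa)}$, which makes the tail over the annuli summable and produces the factor $K^{-\kappa}$.

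First I would fix $x,y\in U$ with $R:=|x-y|\ge K$ and write $y-x$ as a sum of roughly $\lceil R\rceil$ unit steps; more precisely, interpolate along the segment $[x,y]$ by points $x=p_0,p_1,\dots,p_N=y$ with $N\le R+1$ and $|p_{i}-p_{i-1}|\le 1$, and integrate each intermediate point over a unit ball $p_i+Q$ to turn point evaluations into $L^2$-averages (this is the standard device used already in the paper). Applying the discrete Cauchy--Schwarz inequality $(\sum_{i=1}^N a_i)^2\le N\sum_i a_i^2$ gives, after integrating in $x$ and $y$ over $U$ and using the bound $b^\omega(x,y)\le C R^{-(d+2+\kappa)}$,
\begin{equation*}
\int_{\{|x-y|\sim R\}} b^\omega(x,y)(v(x)-v(y))^2\,dx\,dy
\le C\,R^{-(d+2+\kappa)}\,R\cdot R^{d}\cdot \Big(\sup_i \#\{\text{cubes}\}\Big)\int_{\{x,y\in U:|x-y|<1\}}(v(x)-v(y))^2\,dx\,dy .
\end{equation*}
Here the factor $R^{d}$ comes from the volume of the $x$-integration relative to a fixed step, the linear factor $R$ is the number of steps from Cauchy--Schwarz, and the multiplicity of how often a given pair of adjacent unit cubes is used along the chains is uniformly bounded (as in item \emph{ii.} of the proof of Theorem~\ref{t_poin}); collecting powers leaves $R^{-(1+\kappa)}$ in front of the unit-scale energy.

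Finally I would sum over the dyadic scales $R=2^k\ge K$: since $\sum_{2^k\ge K}(2^k)^{-(1+\kappa)}\le C K^{-(1+\kappa)}\le C K^{-\kappa}$, the geometric series converges and yields \eqref{prelemma} with a constant $C$ depending only on the bound $C,\kappa$ in \eqref{ass-a} and on $d$. The main obstacle, and the point that needs care, is the bookkeeping of the chaining for $x,y$ ranging over all of the cube $U$ simultaneously: one must choose the interpolating points $p_i=p_i(x,y)$ (and the auxiliary integration variables on the intermediate unit cubes) so that, after Fubini, each unit-distance pair contributes with a multiplicity bounded independently of $R$ and of the size of $U$, exactly as in the path-counting estimate \eqref{path_num}; the polynomial-in-$R$ losses must be exactly absorbed by the $R^{-(d+2+\kappa)}$ decay of $b^\omega$, which is why the quantified hypothesis \eqref{ass-a} (rather than merely \eqref{finitesm}) is used. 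The restriction that $U$ be a cube is used precisely to make this chaining construction (straight coordinate paths) clean.
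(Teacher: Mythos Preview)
Your strategy coincides with the paper's: chain between $x$ and $y$ via unit steps, introduce auxiliary integrations on intermediate unit cubes, apply Cauchy--Schwarz, use the decay bound \eqref{ass-a}, and sum over distance scales. The paper discretises $U$ into unit lattice cubes $Q(j)$, uses coordinate-wise lattice paths between cube centres, and sums over integer shells $|j'-j''|_1=n$, whereas you propose straight segments and dyadic annuli; the mechanism is identical.

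One point in your bookkeeping is off. The claim that a given adjacent pair of unit cubes is reused with multiplicity bounded \emph{independently of $R$} is not correct, and your reference to item~\emph{ii.}\ in the proof of Theorem~\ref{t_poin} does not support it (there the path count through a vertex is $(L/\e)^{d+1}$, certainly not $O(1)$). In the paper's count, each lattice edge lies on at most $n^d$ coordinate paths joining points at $\ell^1$-distance exactly $n$; this yields the per-thin-shell factor $n\cdot n^d\cdot n^{-(d+2+\kappa)}=n^{-(1+\kappa)}$ and hence $\sum_{n\ge K}n^{-(1+\kappa)}\le CK^{-\kappa}$. If you redo the count honestly over a thick dyadic annulus $|x-y|\in[R,2R]$ (which contains $\sim R$ thin shells) you obtain $R^{d+2}$, not $R^{d+1}$, in front of the unit-scale energy, so $R^{-\kappa}$ per dyadic scale rather than $R^{-(1+\kappa)}$. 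Since $\sum_{2^k\ge K}(2^k)^{-\kappa}\le CK^{-\kappa}$ as well, your conclusion survives; only the intermediate exponent needs correcting. What \emph{is} essential --- and what you rightly flag --- is that the multiplicity be independent of the size of $U$; this is exactly why coordinate paths inside a cube work and why the hypothesis that $U$ is a cube enters.
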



\medskip
\begin{proof}[Proof of Proposition {\rm \ref{prepro1}}]
Without loss of generality we may assume that the cube $U$ is centered at the origin; i.e., $U=Q_T$ for some $T>0$.
Furthermore, we may suppose that $T$ is integer, and cover $Q_T$ with the set of unit cubes $Q(j)=Q+j$, $j\in\mathbb Z^d\cap U$.
If $K>T$, the statement trivially holds. Otherwise, for any $j'$ and $j''$  such that $|j'-j''|_1=n$ with $n\geq K$
we consider a path (i.e., an array of points in $\ZZ^d$), $j'=j_0,j_1,\ldots, j_n=j''$, with $|j_i-j_{i+1}|_1=1$, that has the following properties: in the starting segment of this path $j_0,j_1,\ldots, j_{n_1}$ only the first coordinate is changed
 until it is equal to the first coordinate of $j''$ (i.e., $n_1=j''_1-j'_1$, and $j_{i+1}=j_{i}+(1,0,\ldots,0)$). Then we proceed
with the second coordinate, and so on.

In order to estimate the contribution to the energy of the interaction between the cubes $Q(j')$ and $Q(j'')$, with fixed $n$ we first estimate the integral
\begin{eqnarray*}
&&\int_{\{(y_0,y_n)\in Q\times  Q\}}(v(y_0+j_0)-v(y_n+j_n))^2 dy_0dy_n
\\
&=&\int_Q\ldots\int_Q
\Big(\sum\limits_{i=0}^{n-1}\big (v(y_i+j_i)-v(y_{i+1}+j_{i+1})\big)\Big)^2 dy_0dy_1\ldots dy_n
\\
&\leq& n \int\limits_Q\int\limits_Q
\sum\limits_{i=0}^{n-1}\big (v(x+j_i)-v(y+j_{i+1})\big)^2 dx\,dy\,.
\end{eqnarray*}

Note that each pair
of neighbouring points in $U\cap\mathbb Z^d$ belongs to not more than $n^d$ paths as described above
for some pair $j',\,j''$ in $U$ such that $|j'-j''|_1=n$. Taking this into account and summing up over all $j',\,j''$ in $U\cap\mathbb Z^d$  with $|j'-j''|_1=n$ we obtain
$$
\sum\limits_{\begin{array}{c}\scriptstyle j',\,j'' \in U\cap \mathbb Z^d\\[-1mm] \scriptstyle
|j'-j''|\big._1=n \end{array}}\ \int_{Q\times  Q}(v(x+j')-v(y+j''))^2 dx\,dy\leq n^{d+1}\int_{
(U\times U)\cap \{|x-y|_1\leq 2\}}(v(x)-v(y))^2dx\,dy.
$$
Taking \eqref{ass-a_bis} into account, we have
\begin{eqnarray*}
&&\int_{\{(x,y)\in U\times U: |x-y|>K\}}b^\omega(x,y) (v(x)-v(y))^2dx\,dy
\\
&\leq& C \sum_{n=K}^T \frac{n^{d+1}}{(1+n)^{d+2+\kappa}}\int_{
\{(x,y)\in U\times U: |x-y|_1\leq 2\}}(v(x)-v(y))^2dx\,dy
\\
&\leq& CK^{-\kappa}\int_{
\{(x,y)\in U\times U: |x-y|_1\leq 2\}}(v(x)-v(y))^2dx\,dy.
\end{eqnarray*}
The desired statement follows from the last inequality by a scaling argument.
\end{proof}

\begin{proof}[Proof of Lemma {\rm \ref{lemma-two}}]
Denote
\begin{equation}\label{MKzU-2}
\overline{\cal M}^\omega_K(z,U)=\inf\Bigl\{\int_U\int_{U}b^\omega_K(x,y)(v(x)-v(y))^2dx\,dy:
v(x)=\langle z,x\rangle \hbox{ if  dist}(x,\partial U)<K\Bigr\}.
\end{equation}
Then
\begin{eqnarray}\label{MKzU-3}\nonumber
0&\leq& {\cal M}^\omega_K(z,U)-\overline{\cal M}^\omega_K(z,U)=
\int_U\int_{\mathbb R^d\setminus U}b^\omega_K(x,y)\langle z,(x-y)\rangle^2dx\,dy\\
&\leq& C|z|^2 K^{1-\kappa}{\cal H}^{d-1}(\partial U).
\end{eqnarray}

Let $u$ be a minimizer for ${\cal M}^\omega_{2K}(z,U)$ (which we may assume exists). Let $v$ be given by Definition \ref{coerciveness}
with $\Xi=2K$.
We then have
\begin{eqnarray}\label{ert}\nonumber
\widetilde{\cal M}^\omega_{K}(z,U)&\le& \int_U\int_{U}b^\omega(x,y)(v(x)-v(y))^2dx\,dy\\ \nonumber
&=&
\int_U\int_{U}b^\omega_{2K}(x,y)(v(x)-v(y))^2dx\,dy\\
\nonumber
&&\qquad+
\int_{\{x,y\in U: |x-y|>2K\}}b^\omega(x,y)(v(x)-v(y))^2dx\,dy
\\ \nonumber
&\le&
\overline{\cal M}^\omega_{2K}(z,U)
+C K^{-\kappa}\int_{\{x,y\in U: |x-y|<1\}} (v(x)-v(y))^2dx\,dy
\\ \nonumber
&\le&
\overline{\cal M}^\omega_{2K}(z,U)
+C K^{-\kappa}\int_{U\times U} b(x,y)(v(x)-v(y))^2dx\,dy
\\ \nonumber
&\le&
\overline{\cal M}^\omega_{2K}(z,U)
+C K^{-\kappa}|z|^2|U|
\\
&\le&
{\cal M}^\omega_{2K}(z,U)
+C K^{-\kappa}|z|^2|U|+C|z|^2 K^{1-\kappa}{\cal H}^{d-1}(\partial U),
\end{eqnarray}

Conversely, since $\overline{\cal M}^\omega_{K}(z,U)\le \widetilde{\cal M}^\omega_{K}(z,U)$
we have
\begin{equation}\label{ter}
{\cal M}^\omega_{K}(z,U)\le \widetilde{\cal M}^\omega_{K}(z,U)+ C|z|^2 K^{1-\kappa}{\cal H}^{d-1}(\partial U)
\end{equation}

Dividing by $R^d$, taking the upper limit in \eqref{ert} and the lower limit in \eqref{ter} with $U=Q_R$ we obtain
\begin{eqnarray*}\label{etr-2}
\gamma_K(z)=
\liminf_{R\to+\infty}{{\cal M}^\omega_{K}(z,Q_R)\over R^d}& \le& \liminf_{R\to+\infty}{\widetilde{\cal M}^\omega_{K}(z,Q_R)\over R^d}\\
&\le &\limsup_{R\to+\infty}{\widetilde{\cal M}^\omega_{K}(z,Q_R)\over R^d}\\
&\le&
\limsup_{R\to+\infty}{{\cal M}^\omega_{2K}(z,Q_R)\over R^d}+C K^{-\kappa}|z|^2\\
&=&\gamma_{2K}(z)+ C K^{-\kappa}|z|^2\end{eqnarray*}
Taking the limit as $K\to+\infty$ we obtain the claim.
%
\end{proof}

\section{Homogenization}\label{hom}
We now state and prove a homogenization result with respect to the strong $L^2$-convergence.

\begin{theorem}\label{homthm}
Let $\Domain$ be an open set with Lipschitz boundary, and let $F^\omega_\e$ be given by
\eqref{def-Fe-0} on $L^2(\Omega)$.
Then $F^\omega_\e$ almost surely $\Gamma$-converge with respect to the $L^2$-convergence
to the functional
\begin{equation}\label{homfun}
F_{\rm hom}(u)=\int_\Domain \langle A_{\rm hom}\nabla u,\nabla u\rangle\dx
\end{equation}
on $H^1(\Domain)$, where $A_{\rm hom}$ is a symmetric matrix which satisfies
\begin{equation}\label{cpformula}
 \langle A_{\rm hom}z,z\rangle=\gamma(z).
 \end{equation}
\end{theorem}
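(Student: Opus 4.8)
The plan is to prove the $\Gamma$-convergence of $F^\omega_\e$ to $F_{\rm hom}$ by the blow-up method of Fonseca and M\"uller, proving separately the $\Gamma$-liminf and the $\Gamma$-limsup inequalities, and then identifying the resulting quadratic form with $\gamma$. By the Compactness Theorem \ref{t_comp} and the Poincar\'e inequalities of Section \ref{s_poinc}, we know that along any sequence with equibounded energy and bounded $L^2$-norm, subsequences converge in $L^2_{\rm loc}$ to a limit in $H^1(\Domain)$; thus it is enough to prove the two $\Gamma$-inequalities for such limits $u\in H^1(\Domain)$.

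\textbf{$\Gamma$-liminf inequality.} Let $u_\e\to u$ in $L^2(\Domain)$ with $\sup_\e F^\omega_\e(u_\e)<+\infty$; by the coerciveness and the compactness result we have $u\in H^1(\Domain)$. I would define the sequence of measures $\mu_\e$ on $\Domain$ by
$$
\mu_\e(B)={1\over \e^{d+2}}\int_{B\cap\Domain}\int_{\Domain} b^\omega\Bigl({x\over\e},{y\over\e}\Bigr)(u_\e(y)-u_\e(x))^2\,dy\,dx,
$$
which have equibounded total mass and hence (up to subsequences) weakly-$*$ converge to a finite positive measure $\mu$. Writing $\mu=\psi\,\mathcal L^d+\mu_s$ with $\mu_s\geq 0$ singular, it suffices to show $\psi(x_0)\geq\langle A_{\rm hom}\nabla u(x_0),\nabla u(x_0)\rangle$ for a.e.\ $x_0$, where $A_{\rm hom}$ is defined by \eqref{cpformula}. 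Fixing such a Lebesgue point $x_0$ of $\nabla u$ at which $\psi(x_0)=\lim_{\rho\to0}\mu(x_0+Q_\rho)/\rho^d$ exists and is finite, I would blow up: rescale around $x_0$ at scale $\rho$, using a diagonal argument to choose $\e=\e(\rho)\to0$ so that the rescaled functions $w_\rho(\zeta)=(u_{\e}(x_0+\rho\zeta)-u_\e(x_0)-\rho\langle\nabla u(x_0),\zeta\rangle)/(\rho\,\e/\rho)$ (affine rescaling of the difference quotient) stay controlled, and the local energy on $Q_\rho$ rescales, after the change of variables $\eta=\rho\zeta$, to an energy of the form $\e'^{-(d+2)}\int\!\int b^\omega(x/\e',y/\e')(\cdot)^2$ on the unit cube with $\e'=\e/\rho\to0$. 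The key point is that, after fixing the affine boundary datum $\langle \nabla u(x_0),\cdot\rangle$ near $\partial Q$ (which one achieves by a standard cut-off/De Giorgi slicing argument absorbing error terms using the decay \eqref{ass-a} and Proposition \ref{prepro1}), the rescaled energy is bounded below by $\widetilde{\cal M}^{\omega}_K(\nabla u(x_0),Q)$-type quantities; passing $\e'\to 0$, then the truncation $K\to+\infty$, and invoking Lemma \ref{lemma-two} together with the scaling $\widetilde{\cal M}^\omega_K(z,Q_R)/R^d\to\gamma(z)$ via \eqref{limMtKzU}, yields $\psi(x_0)\geq\gamma(\nabla u(x_0))$. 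This is exactly \eqref{cpformula} once we know $z\mapsto\gamma(z)$ is quadratic.

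\textbf{$\Gamma$-limsup inequality.} For the upper bound I would first treat $u$ affine, say $u(x)=\langle z,x\rangle+c$, on a cube $Q_R$ (rescaled), where an almost-optimal recovery sequence is produced directly from the asymptotically-optimal test functions $v$ in the definition \eqref{MtKzU} of $\widetilde{\cal M}^\omega_K(z,Q_R)$: tiling $\Domain$ by cubes of side $\e R$, placing in each a scaled copy of such a quasi-minimizer (with the correct affine boundary datum to ensure matching across cubes), one gets $\limsup_\e F^\omega_\e(u_\e)\le |\Domain|\,\limsup_{R}\widetilde{\cal M}^\omega_K(z,Q_R)/R^d + o(1)_K$, which by Lemma \ref{lemma-two} tends to $|\Domain|\,\gamma(z)$ as $K\to+\infty$; the cross-cube interaction terms (which occur at distance $>\mathrm{dist}$ between cubes) are negligible by the decay estimate in Proposition \ref{prepro1}. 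Then one extends to piecewise-affine $u$ by localizing the construction on each piece, again controlling interactions between adjacent pieces by the tail decay \eqref{ass-a}, and finally to general $u\in H^1(\Domain)$ by density of piecewise-affine functions in $H^1$ together with the continuity of $F_{\rm hom}$ and a standard diagonal/lower-semicontinuity argument. This identifies the $\Gamma$-limit on affine functions as $\gamma(z)$, matching the liminf bound, and the blow-up representation then forces the $\Gamma$-limit to be the integral functional $\int_\Domain \langle A_{\rm hom}\nabla u,\nabla u\rangle\,dx$.

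\textbf{Quadraticity of $\gamma$ and conclusion.} It remains to check that $z\mapsto\gamma(z)$ is a quadratic form, so that $A_{\rm hom}$ in \eqref{cpformula} is a well-defined symmetric matrix. Since each ${\cal M}^\omega_K(z,U)$ is an infimum of quadratic functionals over an affine space of test functions with linear constraint $v=\langle z,\cdot\rangle$ near $\partial U$, the map $z\mapsto{\cal M}^\omega_K(z,U)$ is positively $2$-homogeneous and satisfies the parallelogram-type inequality ${\cal M}_K(z_1+z_2,U)+{\cal M}_K(z_1-z_2,U)\le 2{\cal M}_K(z_1,U)+2{\cal M}_K(z_2,U)$ (test with $v_1\pm v_2$); passing to the limit in \eqref{limMKzU} and then $K\to+\infty$ preserves homogeneity and the parallelogram inequality, and a matching reverse inequality follows from the liminf bound applied to affine data (or symmetrically), so $\gamma$ satisfies the parallelogram law and is therefore a quadratic form. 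The boundedness $0\le\gamma(z)\le C|z|^2$ comes from \eqref{MKtzU-estimate}, and the lower bound $\gamma(z)\ge c|z|^2$ from \eqref{ass-aa} via the Poincar\'e-type control; hence $A_{\rm hom}$ is symmetric and (for the perforated/coercive case) positive definite. The main obstacle I expect is the liminf blow-up step: making rigorous the simultaneous limit in the two scales $\e$ and $\rho$ while correcting the boundary datum of the rescaled functions to the affine value $\langle\nabla u(x_0),\cdot\rangle$ near $\partial Q$ without increasing the energy more than an infinitesimal amount — this is where the `asymptotically local' nature of the energy (tail decay \eqref{ass-a}, Proposition \ref{prepro1}, and the coerciveness Definition \ref{coerciveness}) must be used carefully to justify replacing the minimization over $\rr^d$-interactions by the cube-localized $\widetilde{\cal M}^\omega_K$ and then invoking Lemma \ref{lemma-two}.
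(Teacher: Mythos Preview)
Your overall strategy matches the paper's: Fonseca--M\"uller blow-up for the liminf, a tiling construction with near-minimizers on cubes for the limsup, and density of piecewise-affine functions for general $u\in H^1(\Domain)$. One minor difference: the paper bypasses your parallelogram-law computation by observing that $\Gamma$-limits of quadratic forms are quadratic (see \cite{GCB}), so once the limit is identified as $\int_\Domain\gamma(\nabla u)\,dx$ the quadraticity of $\gamma$ is automatic and $A_{\rm hom}$ exists.

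There is, however, a genuine gap in your liminf sketch. You write that after blow-up at $x_0$ and rescaling, the energy takes the form $\e'^{-(d+2)}\int\!\int b^\omega(x/\e',y/\e')(\cdot)^2$ on the unit cube with $\e'=\e/\rho$, and you then invoke Lemma~\ref{lemma-two}. But the change of variables actually produces coefficients $b^\omega\bigl(\tfrac{x_0}{\e}+\tfrac{x}{\e'},\,\tfrac{x_0}{\e}+\tfrac{y}{\e'}\bigr)$: the relevant minimum problem lives on the \emph{translated} cube $\tfrac{x_0}{\e}+Q_{R}$ with $R=\rho/\e$, and the center $x_0/\e$ can escape to infinity faster than $R$ as $\e\to0$. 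Lemma~\ref{lemma-two} is stated only for cubes $Q_R$ centered at the origin and does not cover this; stationarity alone is insufficient because one needs the almost-sure limit for \emph{all} $x_0\in\Domain$ simultaneously. The paper closes this gap by invoking instead the uniform-in-translation convergence \eqref{limMKzUbis} of Lemma~\ref{lemma-one} (a consequence of the Krengel--Pyke uniform subadditive ergodic theorem \cite{KrPy}), combined with the freedom \eqref{slogro} to choose $\rho=\rho_\e\to0$ arbitrarily slowly so that $|x_0|/\rho\le f_K(R)$. This is precisely the obstacle you anticipated, and it is the one place where the random (as opposed to periodic) setting forces a sharper ergodic tool than the standard blow-up machinery provides.
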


The proof of this theorem will make use of a `convolution version' of a classical lemma by De Giorgi that allow to match the boundary values of a target function (see \cite{2018BP})

\begin{proposition}[treatment of boundary values]\label{bvp-v2}
Let $A$ be a bounded open set with Lipschitz boundary, let $v_\eta\to v$ in $L^2(A)$ with $v\in H^1(A)$.
For every $\delta>0$ there exist $v^\delta_\eta$ converging to $v$ in $L^2(A)$ such that
$$
v^\delta_\eta= v \hbox{ in } A\setminus A(\delta), \qquad v^\delta_\eta= v_\eta \hbox{ in } A(2\delta)
$$
and
$$
\limsup_{\eta\to 0} (F^\omega_\eta (v^\delta_\eta)- F^\omega_\eta(v_\eta))\le o(1)
$$
as $\delta\to 0$.
\end{proposition}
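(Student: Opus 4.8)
The plan is to use the classical "averaging over nested shells" trick of De Giorgi, adapted to the convolution setting. Fix $\delta>0$ and choose an integer $N$ (to be sent to $\infty$ at the end, or rather to be fixed depending on how small we want the error). Partition the annular region $A(\delta)\setminus A(2\delta)$ into $N$ concentric sub-annuli $S_1,\dots,S_N$, where $S_i=A(\delta+(i-1)\delta/N)\setminus A(\delta+i\delta/N)$, each of width $\delta/N$. For each $i$ pick a cutoff function $\varphi_i\in C^\infty(\rr^d)$ with $0\le\varphi_i\le1$, $\varphi_i=1$ on $A(\delta+i\delta/N)$, $\varphi_i=0$ outside $A(\delta+(i-1)\delta/N)$, and $|\nabla\varphi_i|\le CN/\delta$. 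Then set $v^{\delta,i}_\eta=\varphi_i v_\eta+(1-\varphi_i)v$. By construction $v^{\delta,i}_\eta= v$ in $A\setminus A(\delta)$ and $v^{\delta,i}_\eta=v_\eta$ in $A(2\delta)$ (since $i\delta/N\le\delta$), and $v^{\delta,i}_\eta\to v$ in $L^2(A)$ because $v_\eta\to v$ there.

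The key step is the energy estimate. Writing $w_\eta=v_\eta-v$ (so $w_\eta\to 0$ in $L^2(A)$ and $v^{\delta,i}_\eta-v_\eta=(\varphi_i-1)w_\eta$), one expands
\[
F^\omega_\eta(v^{\delta,i}_\eta)-F^\omega_\eta(v_\eta)
= \frac1{\eta^{d+2}}\int_A\int_A b^\omega\!\Bigl(\tfrac x\eta,\tfrac y\eta\Bigr)
\Bigl[(v^{\delta,i}_\eta(y)-v^{\delta,i}_\eta(x))^2-(v_\eta(y)-v_\eta(x))^2\Bigr]dy\,dx .
\]
Using the algebraic identity
$(v^{\delta,i}_\eta(y)-v^{\delta,i}_\eta(x))=(v_\eta(y)-v_\eta(x)) -\bigl[(1-\varphi_i(y))w_\eta(y)-(1-\varphi_i(x))w_\eta(x)\bigr]$,
the difference of squares is controlled by cross terms. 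The new contribution, relative to the terms already present in $F^\omega_\eta(v_\eta)$ and $F^\omega_\eta(v)$ (both of which we may assume are equibounded — if $F^\omega_\eta(v_\eta)$ is not bounded the statement is trivial, and $F^\omega_\eta(v)\to F_{\rm hom}(v)<+\infty$ since $v\in H^1$), involves terms of the form
\[
\frac1{\eta^{d+2}}\int_A\int_A b^\omega\!\Bigl(\tfrac x\eta,\tfrac y\eta\Bigr)\bigl((1-\varphi_i(y))w_\eta(y)-(1-\varphi_i(x))w_\eta(x)\bigr)^2 dy\,dx .
\]
Splitting $(1-\varphi_i(y))w_\eta(y)-(1-\varphi_i(x))w_\eta(x)=(1-\varphi_i(x))(w_\eta(y)-w_\eta(x))+(\varphi_i(x)-\varphi_i(y))w_\eta(y)$, the first piece is bounded by $F^\omega_\eta(v_\eta)+F^\omega_\eta(v)\le C$ restricted to pairs with at least one point in the support of $1-\varphi_i$, which lies in $A\setminus A(\delta+i\delta/N)$; the second piece, using $|\varphi_i(x)-\varphi_i(y)|\le (CN/\delta)|x-y|$ and the decay \eqref{ass-a}, is bounded by $C(N/\delta)^2\eta^{-d}\int\!\int_{|x-y|<\text{const}\cdot\eta}\eta^2\cdots$ which is $\le C(N/\delta)^2\|w_\eta\|^2_{L^2(A)}\to 0$ as $\eta\to0$ (with $N,\delta$ fixed). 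The averaging argument now enters: among the $N$ choices $i=1,\dots,N$, since the supports $A\setminus A(\delta+i\delta/N)$ are nested and the portion of the bounded energy $F^\omega_\eta(v_\eta)+F^\omega_\eta(v)$ carried by the thin annulus $S_i$ (plus an $O(\eta)$-neighbourhood, to account for the nonlocal range of $b^\omega$) sums to at most the total, there exists an index $i=i(\eta)$ for which that localized energy is $\le C/N$. For that index we obtain
\[
F^\omega_\eta(v^{\delta,i(\eta)}_\eta)-F^\omega_\eta(v_\eta)\le \frac CN + C(\eta;N,\delta),
\]
with $C(\eta;N,\delta)\to0$ as $\eta\to0$. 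Setting $v^\delta_\eta=v^{\delta,i(\eta)}_\eta$ and taking $\limsup_{\eta\to0}$ gives a bound $\le C/N$; choosing $N=N(\delta)\to\infty$ as $\delta\to0$ makes the right-hand side $o(1)$.

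The main obstacle, and the place where the convolution (nonlocal) nature of the energy requires genuine care beyond the classical local argument, is the bookkeeping of the interaction across the cutoff region: pairs $(x,y)$ with $x$ and $y$ on opposite sides of the transition layer interact even when $|x-y|$ is of order $\eta$, so the "localized energy on $S_i$" must really be the energy of pairs with at least one endpoint in an $O(\eta)$-fattening of $S_i$. One must check that fattening by $O(\eta)$ does not destroy the pigeonhole count — it does not, because for $\eta$ small enough the fattened annuli $S_i$ still have bounded overlap multiplicity (independent of $\eta$), so the sum of the localized energies is still $\le C\bigl(F^\omega_\eta(v_\eta)+F^\omega_\eta(v)\bigr)\le C$, and the pigeonhole principle yields an index with localized energy $\le C/N$. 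The tail decay \eqref{ass-a} is what guarantees that the long-range interactions (pairs with $|x-y|$ large) contribute a negligible amount and can be absorbed; this is exactly the estimate already isolated in Proposition \ref{prepro1}, which may be invoked here. Once this localization is set up correctly, the rest is the routine De Giorgi averaging and the proof concludes as above.
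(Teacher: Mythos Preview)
The paper does not actually prove this proposition; it merely cites \cite{2018BP} for the ``convolution version'' of De Giorgi's cut-off lemma. Your plan is the standard one and is essentially correct, but two points need tightening.

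First, you justify the boundedness of $F^\omega_\eta(v)$ by writing ``$F^\omega_\eta(v)\to F_{\rm hom}(v)<+\infty$ since $v\in H^1$.'' This is circular: the proposition is being invoked inside the proof of Theorem~\ref{homthm}, so you cannot appeal to the $\Gamma$-limit. What you actually need (and what is true) is the elementary estimate $F^\omega_\eta(v)\le C\|\nabla v\|_{L^2(A)}^2$, which follows directly from the decay bound \eqref{ass-a} and the fact that $v\in H^1(A)$ via the change of variables $y=x+\eta\xi$.

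Second, and more substantively, your localization for the pigeonhole step is mismatched. You bound the ``first piece'' $(1-\varphi_i(x))(w_\eta(y)-w_\eta(x))$ by the energy restricted to the support of $1-\varphi_i$, which is the \emph{nested} set $A\setminus A(\delta+i\delta/N)$; then you apply pigeonhole over the \emph{disjoint} annuli $S_i$. These are different regions, and the nested sets do not sum to a bounded total. The clean fix is not to expand $(u^i-v_\eta)$ algebraically but to split $A\times A$ geometrically into three parts: pairs with both points in $A(\delta+i\delta/N)$ (where $u^i=v_\eta$, so this contribution cancels exactly against the corresponding part of $F^\omega_\eta(v_\eta)$), pairs with both points in $A\setminus A(\delta+(i-1)\delta/N)$ (where $u^i=v$, giving a contribution bounded by $F^\omega_\eta(v;A\setminus A(\delta))=o(1)$ as $\delta\to0$), and the remainder, which for short-range pairs is contained in $\{(x,y):x\in S_i\hbox{ or }y\in S_i\}$ (long-range pairs being handled by Proposition~\ref{prepro1} as you say). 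On this remainder the crude bound $(a+b+c)^2\le 3(a^2+b^2+c^2)$ gives a contribution $\le C\bigl[F^\omega_\eta(v_\eta;S_i)+F^\omega_\eta(v;S_i)+(N/\delta)^2\|w_\eta\|_{L^2}^2\bigr]$, and \emph{now} the pigeonhole over $i$ legitimately yields an index with $F^\omega_\eta(v_\eta;S_i)+F^\omega_\eta(v;S_i)\le C/N$. With these two repairs your argument goes through.
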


\begin{proof}[Proof of Theorem {\rm\ref{hom}}] By Remark \ref{co-co} it suffices to describe the $\Gamma$-limit in $H^1(D)$.

We note that $F^\omega_\e$ are quadratic functionals, so that also their $\Gamma$-limit is a quadratic functional (see \cite{GCB}). Then, if we prove that the $\Gamma$-limit exists and admits the representation
\begin{equation}\label{limwg}
F_{\rm hom}(u)=\int_\Domain \gamma(\nabla u)\dx,
\end{equation}
then also $\gamma$ must be a quadratic form on $\rr^d$, from which
the existence of a matrix $A_{\rm hom}$ satisfying \eqref{cpformula} follows.

We now prove \eqref{limwg}, first showing a lower bound.
We fix $\omega$, $u\in H^1(D)$ and a sequence $u_\e\to u$ with bounded $F_\e(u_\e)$.
As in \cite{2018BP}, we use a variation of the Fonseca-M\"uller blow-up technique \cite{FM}.
We first define the measures on $D$ given by
$$
\mu_\e(A)={1\over\e^{d+2}}\int_{A}\int_{D}
b^\omega\Bigl({x\over\e},{y\over\e}\Bigr)(u_\e(y)-u_\e(x))^2d\xi\dx.
$$
Since $\mu_\e(D)=F_\e(u_\e)$, these measures are equibounded, and we may suppose that they converge weakly$^*$ to some measure $\mu$. We now fix an arbitrary Lebesgue point $x_0$ for $u$ and $\nabla u$, and set $z=\nabla u(x_0)$. The lower-bound inequality is proved if we show that
\begin{equation}
{d\mu\over dx}(x_0)\ge \gamma(z).
\end{equation}

Upon a translation argument it is not restrictive to suppose that $x_0$ be a Lebesgue point of all $u_\e$ (upon passing to a subsequence), and that $u_\e(x_0)=u(x_0)=0$.
We note that for almost all $\rho>0$ we have $\mu_\e(x_0+Q_\rho)\to \mu(x_0+Q_\rho)$.
Since
$$
{d\mu\over dx}(0)= \lim_{\rho\to 0^+}{\mu(x_0+Q_\rho)\over\rho^d},
$$
and for almost all $\rho>0$
$$
\mu(Q_\rho)=\lim_{\e\to0}\mu_\e(x_0+Q_\rho)
$$
we may choose (upon passing to a subsequence) $\rho=\rho_\e$ with $1>\!>\rho>\!>\e$ such that
$$
{d\mu\over dx}(0)= \lim_{\e\to 0^+}{\mu_\e(x_0+Q_\rho)\over\rho^d}.
$$
Note that we may choose $\rho_\e$ tending to zero ``arbitrarily slow''; i.e.,
for all $f$ with $\lim\limits_{\e\to 0}f(\e)=0$ we may choose $\rho_\e$ with
\begin{equation}\label{slogro}
\rho_\e\ge f(\e).
\end{equation}
Note moreover that
\begin{eqnarray*}
\mu_\e(x_0+Q_\rho)&=&{1\over\e^d}\int_{x_0+Q_\rho}\int_{D}
b^\omega\Bigl({x\over\e},{y\over\e}\Bigr)\Bigl({u_\e(y)-u_\e(x)\over \e}\Bigr)^2\dx\,dy
\\
&\ge&{1\over\e^d}\int_{x_0+Q_\rho}\int_{x_0+Q_\rho}
b^\omega\Bigl({x\over\e},{y\over\e}\Bigr)\Bigl({u_\e(y)-u_\e(x)\over \e}\Bigr)^2\dx\,dy.
\end{eqnarray*}

We now change variables and set
$$
v_\e(y)= {u_\e(x_0+\rho y)\over \rho} \hbox{ for } y\in Q_1\,.
$$
Note that, since ${u(\rho y)\over \rho}$ converges to $\langle z,y\rangle$ as $\rho\to 0$ as we have assumed that $u(x_0)=0$,
and we also have assumed that  $u_\e(x_0)=0$, we may choose $\rho=\rho_\e$ above so that
$$
v_\e \to \langle z,y\rangle\hbox{ in } L^2(Q_1).
$$

By Proposition \ref{bvp-v2} above, applied with $v=\langle z,x\rangle$, $A=Q_1$ and
$\eta=\e/\rho$, for all $\delta>0$ there exists a sequence $v^\delta_\e$ such that $v^\delta_\e(y)=\langle z,y\rangle$ on $Q_1\setminus Q_{1-\delta}$ and
\begin{eqnarray*}
&&{1\over\e^d\rho^d}\int_{Q_\rho}\int_{Q_\rho}
b^\omega\Bigl({x\over\e},{y\over\e}\Bigr)\Bigl({u_\e(x)-u_\e(y)\over \e}\Bigr)^2 \dx\,dy
\\
&\ge& {\rho^d\over\e^d}
\int_{Q_1}\int_{Q_1}
b^\omega\Bigl({x_0\over\e}+{x\over\e/\rho},{x_0\over\e}+{y\over\e/\rho}\Bigr)\Bigl({v^\delta_\e(x)-v^\delta_\e(y)\over \e/\rho}\Bigr)^2\dx\,dy + o(1)
\end{eqnarray*}
as $\delta\to 0$ uniformly in $\e$.

If we set $R=R_\e=\rho/\e$ and change variables, we get
\begin{eqnarray*}
{1\over \rho^d}\mu_\e(x_0+Q_\rho)\ge {1\over R^d}
\int_{{x_0\over \e}+Q_{\rho\over\e}}\int_{{x_0\over \e}+Q_{\rho\over\e}}
b^\omega(x,y)(v_R(x)-v_R(y))^2\dx\,dy +o(1)
\end{eqnarray*}
as $\delta\to0$, where
$$
v_R(x)= v^\delta_\e\Bigl({x\over R}-{x_0\over\rho}\Bigr).
$$
For every fixed $K>0$ we have that
$$
v_R(x)=\langle z, x\rangle\hbox{ if dist}\Bigl(x,\partial \Bigl({x_0\over \e}+Q_{\rho\over\e}\Bigr)\Bigr)<K
$$
for $\e$ small enough (and hence $R$ large enough). Hence, we may use $v_R$ as a test function in the definition on $\widetilde{\cal M}^\omega_K(z,Q_R)$. We also note that suitably choosing $f$ in \eqref{slogro} we have that $x_R=x_0/\rho$ satisfies $|x_R|\le Rf_K(R)$ in Lemma \ref{lemma-one}, so that we finally obtain
\begin{eqnarray*}
\lim_{\e\to0}{1\over \rho^d}\mu_\e(x_0+Q_\rho)&\ge&\lim_{R\to+\infty}{{\cal M}^\omega_K(z,x_R+Q_R)\over R^d}+o(1)
= \gamma_K(z)+o(1)
\end{eqnarray*}
as $\delta\to0$.
Hence we have
$$
\Gamma\hbox{-}\liminf_{\e\to0}F_\e(u)\ge\int_U\gamma_K(\nabla u)\dx +o(1)
$$
By taking the supremum in $K$, using the Monotone Convergence Theorem, and by the arbitrariness of $\delta$ we get the desired lower bound.

\bigskip
The proof of the upper bound is obtained by a standard density argument by piecewise-affine functions (see also \cite{2018BP}) once it is shown for $D$ a $d$-dimensional simplex $S$ and $u(x)=\langle z,x\rangle$ a linear function.
We consider $L$ large enough so that $Q_L\supset D$ for some $L>0$. We fix $m\in\NN$ and subdivide $Q_L$ into $m^d$ cubes $Q^m_i=x^m_i+Q_{L/m}$ of side-length $L/m$ and disjoint interiors. With fixed $K\in\NN$ we choose $u^i_\e\in L^2({1\over\e}Q^m_i)$ such that $v(x)=\langle z,x\rangle$ if  dist$(x,{1\over\e}\partial Q^m_i)<K$ and
\begin{eqnarray}\nonumber
\int_{{1\over\e}Q^m_i\times {1\over\e}{Q^m_i}}b^\omega(x,y)(u^i_\e(x)-u^i_\e(y))^2dx\,dy&\le& {\cal M}^\omega_K\Bigl(z,{1\over\e}x^m_i+Q_{L\over m\e}\Bigr)+1\\
&\le& {L^d\over m^d\e^d}(\gamma_K(z)+o(1))+1
\end{eqnarray}
as $\e\to 0$ and $K\to+\infty$.

We then define $u^m_\e\in L^2(Q)$ by setting
$$
u^m_\e(x)= \e \,u^i_\e\Bigl({x\over\e}\Bigr)\hbox{ if } x\in Q^m_i\,.
$$
We set $$I^m=\{I: Q^m_i\cap D\neq\emptyset\},$$
and compute
\begin{eqnarray*}
F^\omega_\e(u^m_\e)&\le &\sum_{i\in I^m} {1\over\e^{d+2}}
\int_{Q^m_i\times Q^m_i }b^\omega\Bigl({x\over\e},{y\over\e}\Bigr)(u^m_\e(x)-u^m_\e(y))^2dx\,dy\\
&&+{1\over\e^{d+2}}\sum_{i\neq j}\int_{\{x\in Q^m_i: {\rm dist}(x,\partial Q^m_i)<\e K\}}\int_{\{y\in Q^m_i: {\rm dist}(x,\partial Q^m_i)<\e K\}}
b^\omega\Bigl({x\over\e},{y\over\e}\Bigr)|z|^2|x-y|^2\dx\,dy\\
&&+{1\over\e^{d+2}}\int_{\{x,y\in Q_L: |x-y|>\e K\}}b^\omega\Bigl({x\over\e},{y\over\e}\Bigr)(u^m_\e(x)-u^m_\e(y))^2dx\,dy
\\
&\le &\sum_{i\in I^m} \e^d
\int_{{1\over\e}Q^m_i\times {1\over\e}{Q^m_i}}b^\omega(x,y)(u^i_\e(x)-u^i_\e(y))^2dx\,dy+CKm\e|z|^2+CK^{-\eta}
\\
&\le & \Bigl(|U|+O\Bigl({1\over m}\Bigr)\Bigr)\gamma_K(z)+o(1)+CKm\e|z|^2+CK^{-\eta}.
\end{eqnarray*}
Note that we have used assumption \eqref{ass-a} to estimate the second term in the sum, and Proposition \ref{prepro1}
with $U={L\over\e}Q$ and the coerciveness of $b^\omega$ to estimate the third term in the sum.

We may now choose $m=m_\e\to+\infty$ such that
$$
\limsup_{\e\to 0}F^\omega_\e(u^m_\e)\le L^d\gamma_K(z)+o(1)
$$
as $K\to+\infty$.
Note that, since $u^m_\e(x)=\langle z,x\rangle$ if dist$(x,\bigcup_i\partial(Q^m_i))<\e K$ then $u^m_\e\to \langle z,x\rangle$
in $L^2(D)$ and we obtain an upper bound with $\gamma_K(z)+o(1)$. Letting $K\to+\infty$ we finally have the desired estimate.
\end{proof}

\section{Random perforated domains}\label{hom_perf}
In this section we note that Theorem \ref{hom} can be applies to the homogenization on randomly perforated domains.

First we define random sets in $\mathbb R^d$.
Let $(\Omega,\mathcal{F},\mathbf{P})$  be a standard probability set, and assume that  $\tau_x$, $x\in\mathbb R^d$
is a {\em measure-preserving dynamical system} on this probability space; that is, $\{\tau_x\}_{x\in\mathbb R^d}$ is a group of measurable mappings $\tau_x:\Omega\mapsto\Omega$ such that
\begin{itemize}
  \item $\tau_x\circ\tau_y=\tau_{x+y}, \quad \tau_0=\mathrm{Id}$,
  \item $\mathbf{P}(\tau_x A)=\mathbf{P}(A)$  for all $x\in\mathbb R^d$ and $A\in \mathcal{F}$,
  \item $\tau_\cdot:\mathbb R^d\times\Omega\mapsto\Omega$ is a measurable map. We assume here that
  $\mathbb R^d\times\Omega$ is equipped with a product $\sigma$-algebra $\mathcal{B}\times\mathcal{F}$,
  where $\mathcal{B}$ is a Borel $\sigma$-algebra in $\mathbb R^d$.
\end{itemize}
We also assume that $\{\tau_x\}$ is {\em ergodic}; that is, the measure of any set $A\in\mathcal{F}$ which is invariant with respect to $\tau_x$ for all $x\in\mathbb R^d$ is equal to $0$ or $1$.

\begin{definition}[random sets and random perforations]
We say that  $E^\omega=\{x\in \mathbb R^d\,:\, \chi_{\Omega_1}(\tau_x\omega)=1\}$ is a {\em random set} in $\mathbb R^d$
if $\Omega_1\in\mathcal{F}$ is such that $\mathbf{P}(\Omega_1)\mathbf{P}(\Omega\setminus\Omega_1)>0$..
A random set  $E^\omega$ is called a {\em random perforated domain} if it possesses the following properties:
\begin{enumerate}
  \item Almost surely $\mathbb R^d\setminus E^\omega$ is a union of bounded open sets in $\mathbb R^d$;
  \item The diameters of these sets are uniformly bounded.
  \item The distance between any two distinct sets is bounded from below by a positive constant.
  \item The boundary of these sets are uniformly Lipschitz continuous; i.e., there exist constants  $L>0$ and $\rho_1, \,\rho_2>0$    such that for any point $x\in\partial E^\omega$  there exists a set $C$ which,
       up to translation by $x$ and rotation, is of the form $(-\rho_1,\rho_1)^{d-1}\times(-\rho_2,\rho_2)$ such that
       $C\cap E^\omega$ is the sub-graph of a $L$-Lipschitz function defined on $(-\rho_1,\rho_1)^{d-1}$.

\end{enumerate}
\end{definition}

We now assume that $E^\omega$ is a random perforated domain, and  we set
\begin{equation}\label{perfo_b}
b^\omega(x,y)=\chi\big._{E^\omega}(x) \chi\big._{E^\omega}(y)a(x-y).
\end{equation}
The key observation is that such $b^\omega$ is coercive.
This is implied by the following theorem in \cite{2018BP}.

\begin{theorem}[extension theorem]\label{t_ext}
Let $E^\omega$ be a random perforated domain that satisfies condition {\rm (1)--(4)} above.
Let $b^\omega$ be defined by \eqref{perfo_b}.
Then there exists $k>0$ and $r>0$ such that almost surely 
for all $u\in L^2(\Domain\cap \e E^\omega)$ there exists $v\in L^2(\Domain)$ such that
\begin{equation}\label{odin}
v=u\ \ \hbox{\rm on } \Domain\cap\e E^\omega,
\end{equation}
\begin{equation}\label{dva}
\int_{\Domain(k\e)}\int_{\{|\xi|\le r\}}
\Bigl({v(x+\e\xi)-v(x)\over\e}\Bigr)^2d\xi \dx\le C F^\omega_\e(u)
\end{equation}
and
\begin{equation}\label{tri}
\int_{\Domain(k\e)}|v|^2\dx\le C\int_{\Domain\cap\e E}|u|^2\dx.
\end{equation}
\end{theorem}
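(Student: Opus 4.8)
The plan is to reduce the statement to the deterministic periodic-type extension result of \cite{2018BP} by exploiting the uniform geometric bounds (1)--(4) that characterize a random perforated domain. The essential point is that, although $E^\omega$ depends on $\omega$, the constants $L, \rho_1, \rho_2$ and the lower bound on the mutual distance of the holes are \emph{uniform in $\omega$}, so the local extension construction can be carried out one hole at a time with $\omega$-independent constants, and then patched together. First I would fix a realization $\omega$ in the set of full probability on which $\mathbb{R}^d\setminus E^\omega$ is a disjoint union of bounded open sets $\{P_n^\omega\}_n$ with $\mathrm{diam}(P_n^\omega)\le d_0$, $\mathrm{dist}(P_n^\omega,P_m^\omega)\ge \delta_0$ for $n\ne m$, and uniformly $L$-Lipschitz boundaries. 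Rescaling by $\e$, the holes of $\e E^\omega$ have diameter $\le \e d_0$ and are $\e\delta_0$-separated.

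Next I would invoke the classical Sobolev/$H^1$ extension lemma for a single Lipschitz perforation with controlled constants: for each rescaled hole $\e P_n^\omega$, thickened by a fixed multiple to a set $\e(P_n^\omega)^+$ still contained in $\Domain$ and still mutually disjoint for distinct $n$ (this is where $\delta_0>0$ is used), one builds a local extension operator from $L^2$ functions on a neighbourhood of $\e\partial(E^\omega\cap (P_n^\omega)^+)$ into $\e(P_n^\omega)^+$. In the convolution setting one must be slightly careful: the relevant "energy" near a hole is $\int\int_{|x-y|<\e r} (u(y)-u(x))^2$ restricted to the annular region around $\e P_n^\omega$, and the construction of \cite{2018BP} produces $v$ on $\e(P_n^\omega)^+$ with $\int_{\e(P_n^\omega)^+}\int_{\{|\xi|\le r\}}\bigl((v(x+\e\xi)-v(x))/\e\bigr)^2 d\xi\,dx$ controlled by the corresponding integral of $u$ over a slightly larger set, with a constant depending only on $d, L, \rho_1, \rho_2, \delta_0, d_0, r$ — hence $\omega$-independent. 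One sets $v=u$ on $\Domain\cap\e E^\omega$ and $v$ equal to this local extension inside each $\e(P_n^\omega)^+$; since the thickened holes are disjoint, these definitions are compatible. The choice of $k$ and $r$ comes out of this construction: $r$ is the fixed interaction radius needed for the estimate, and $\Domain(k\e)$ accounts for the $\e$-layer near $\partial\Domain$ lost in the thickening.

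Then the three conclusions follow by summing over the holes. Conclusion \eqref{odin} is immediate from the construction. For \eqref{dva}, decompose $\Domain(k\e)$ into the part lying in $\e E^\omega$ (where $v=u$ and the estimate is trivial after bounding the convolution integral by $F^\omega_\e(u)$ using \eqref{ass-aa}, since $a\ge c>0$ on $|\xi|\le r_0$) and the union of the thickened holes; on the latter, sum the per-hole estimates, noting that each pair $(x,y)$ with $|x-y|<\e r$ in the annular regions is counted a bounded number of times because the holes are $\e\delta_0$-separated and have bounded diameter — this gives a finite-overlap bound with $\omega$-independent multiplicity. Conclusion \eqref{tri} is the same bookkeeping for the $L^2$ norm: the local extension of \cite{2018BP} also satisfies $\int_{\e(P_n^\omega)^+}|v|^2\le C\int_{\e(P_n^\omega)^+\cap \e E^\omega}|u|^2$ with uniform $C$, and summing with bounded overlap yields the claim.

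The main obstacle is verifying that the local extension operator from \cite{2018BP} really does have constants depending only on the geometric data $(d,L,\rho_1,\rho_2,\delta_0,d_0,r)$ and not on the particular shape or position of the hole, and that its natural domain and target can be arranged to be mutually disjoint over all holes simultaneously (so that "glue the pieces" is literally valid, with no competition between neighbouring extensions). Once that uniformity is in hand — which is exactly the content of the cited theorem, whose construction is local and scale-invariant — the passage from the single-hole estimate to the global estimate over the random perforation is a routine finite-overlap summation, and measurability in $\omega$ is inherited from the measurability of $x\mapsto\chi_{\Omega_1}(\tau_x\omega)$ together with the explicit, deterministic nature of the local construction.
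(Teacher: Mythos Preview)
The paper does not give a proof of Theorem~\ref{t_ext}; it simply \emph{quotes} the result from \cite{2018BP} (``This is implied by the following theorem in \cite{2018BP}''), so there is no argument in the text to compare your proposal against. Your sketch---localize around each hole, invoke the single-hole extension with constants depending only on the uniform geometric data $(d,L,\rho_1,\rho_2,\delta_0,d_0,r)$, then sum with bounded overlap---is a reasonable outline of how the cited construction presumably works, and in particular you are correct that the whole point is the $\omega$-independence of the constants guaranteed by conditions (1)--(4). In that sense your approach is not an alternative to the paper's: it \emph{is} an attempt to unpack the content of the very reference \cite{2018BP} the paper defers to.

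One caution: your proposal leans on \cite{2018BP} being stated for a single Lipschitz perforation with explicit dependence of constants, whereas that reference (per its title) treats \emph{periodically} perforated domains, not individual holes. The step you flag as ``the main obstacle''---extracting from \cite{2018BP} a genuinely local, scale-invariant, single-hole extension operator with controlled constants---is indeed the substantive content, and it is not something you can simply read off from a periodic statement; it requires going into the construction itself. So while nothing in your outline is wrong, the real work lies precisely where you say it does, and the present paper offers no help with it beyond the citation.
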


{
Theorem \ref{homthm} can be rephrased as follows.

\begin{theorem}\label{homthm}
Let $\Domain$ be an open set with Lipschitz boundary, let $E^\omega$ be a random perforated domain as above, and let $F^\omega_\e$ be given by
\begin{eqnarray}\label{def-Fe-random-set}
F^\omega_\e(u)={1\over \e^{d+2}}\int_{(D\cap\e E^\omega)\times (D\cap\e E^\omega)}a\Bigl({x-y\over\e}\Bigr)
(u(y)-u(x))^2dy \dx,
\end{eqnarray}
Then $F^\omega_\e$ almost surely $\Gamma$-converge with respect to the $L^2$-convergence
to the functional \eqref{homfun} on $H^1(\Domain)$, where $A_{\rm hom}$ is a symmetric matrix which satisfies
\begin{eqnarray}\label{homformula-set} \nonumber
 \langle A_{\rm hom}z,z\rangle&=&\lim_{K\to+\infty}\lim_{R\to+\infty}{1\over R^d}
\inf\Bigl\{\int_{Q_R\cap E^\omega}\int_{E^\omega}a(x-y)(v(x)-v(y))^2dx\,dy:
\\
&&\qquad\qquad\qquad\qquad
v(x)=\langle z,x\rangle \hbox{ \rm if  dist}(x,\partial Q_R)<K\Bigr\}.
 \end{eqnarray}
\end{theorem}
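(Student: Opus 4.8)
The plan is to deduce this theorem as a direct corollary of Theorem \ref{homthm} (the general homogenization result) combined with Theorem \ref{t_ext} (the extension theorem) and the equivalence of the various asymptotic formulas established in Lemmas \ref{lemma-one} and \ref{lemma-two}. First I would observe that the functional \eqref{def-Fe-random-set} is exactly of the form \eqref{def-Fe-0} with the choice $b^\omega(x,y)=\chi_{E^\omega}(x)\chi_{E^\omega}(y)a(x-y)$ as in \eqref{perfo_b}, and that this $b^\omega$ satisfies the stationarity and measurability hypotheses of Section \ref{setting_sec}: stationarity follows because $E^\omega$ is defined through the measure-preserving dynamical system $\tau_x$ via ${\tt b}(\omega_1,\omega_2,\xi)=\chi_{\Omega_1}(\omega_1)\chi_{\Omega_1}(\omega_2)a(\xi)$, and the growth bound \eqref{ass-a_bis} is inherited from \eqref{ass-aaa} on $a$ since the characteristic functions are bounded by $1$.

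The second step is to verify that $b^\omega$ is a coercive energy function in the sense of Definition \ref{coerciveness}. This is precisely the content of Theorem \ref{t_ext}: given $u$ with the affine boundary datum on $\Domain\cap\e E^\omega$, the extension $v$ provided there satisfies \eqref{odin} (hence agrees with $u$, and in particular keeps the affine boundary values — one has to check the extension respects the boundary condition on the prescribed neighbourhood, which follows from the local nature of the extension construction in \cite{2018BP}), while \eqref{dva} gives exactly the estimate \eqref{def-Fe-1} relating the $L^2$ difference quotient energy to $F^\omega_\e(u)$, and the energy-decreasing property \eqref{def-Fe-zero} holds trivially since $v=u$ on the perforated set where $b^\omega$ is supported. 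Once coerciveness is in hand, Theorem \ref{homthm} applies verbatim and yields $\Gamma$-convergence in $L^2$ to $F_{\rm hom}(u)=\int_\Domain\langle A_{\rm hom}\nabla u,\nabla u\rangle\dx$ with $\langle A_{\rm hom}z,z\rangle=\gamma(z)$.

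The third step is to identify $\gamma(z)$ with the right-hand side of \eqref{homformula-set}. By definition $\gamma(z)=\lim_{K\to\infty}\gamma_K(z)=\lim_{K\to\infty}\lim_{R\to\infty}{\cal M}^\omega_K(z,Q_R)/R^d$, where ${\cal M}^\omega_K$ is defined with $b^\omega_K$ and the inner integral over all of $\rr^d$. On the other hand, Lemma \ref{lemma-two} shows that $\gamma(z)$ equals the same double limit with $\widetilde{\cal M}^\omega_K(z,Q_R)$, i.e.~with $b^\omega$ (not truncated) and both integrals over $Q_R$. Substituting $b^\omega(x,y)=\chi_{E^\omega}(x)\chi_{E^\omega}(y)a(x-y)$ turns $\int_{Q_R}\int_{Q_R}b^\omega(x,y)(v(x)-v(y))^2$ into $\int_{Q_R\cap E^\omega}\int_{Q_R\cap E^\omega}a(x-y)(v(x)-v(y))^2$, which differs from the expression in \eqref{homformula-set} only in that the second integration is over $Q_R\cap E^\omega$ rather than all of $E^\omega$; this discrepancy is controlled by the same boundary-layer estimate used in \eqref{MKzU-3} and \eqref{ter} (the contribution of $x\in Q_R$, $y\in E^\omega\setminus Q_R$ is $O(|z|^2 K^{1-\kappa}{\cal H}^{d-1}(\partial Q_R))=O(|z|^2 K^{1-\kappa}R^{d-1})$, which vanishes after dividing by $R^d$ and sending $R\to\infty$, then $K\to\infty$). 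Hence the two double limits agree and \eqref{homformula-set} follows.

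I expect the main obstacle to be purely bookkeeping: checking that the extension operator from Theorem \ref{t_ext}, whose stated output is only $v\in L^2(\Domain)$ with properties \eqref{odin}--\eqref{tri}, genuinely verifies \emph{all} clauses of Definition \ref{coerciveness}, in particular that the affine boundary datum $\langle z,x\rangle$ prescribed near $\partial U$ is preserved by the extension (on a possibly smaller neighbourhood), and that \eqref{def-Fe-zero} is consistent with the fact that the extension is not literally energy-minimizing but only satisfies an energy bound. Since $b^\omega$ vanishes outside $E^\omega\times E^\omega$ and $v=u$ there, the inequality \eqref{def-Fe-zero} is in fact an equality, so this is not a real difficulty; the only care needed is in matching the neighbourhood sizes ($\Xi$ versus $\Xi/2$, versus $k\e$ and $r$ in Theorem \ref{t_ext}) by a rescaling argument. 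Everything else is an immediate consequence of the results already proved.
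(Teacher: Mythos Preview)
Your proposal is correct and follows exactly the route the paper takes: the paper does not give a separate proof of this statement at all, but simply remarks that $b^\omega(x,y)=\chi_{E^\omega}(x)\chi_{E^\omega}(y)a(x-y)$ is coercive by virtue of the extension Theorem \ref{t_ext}, and then declares that the general Theorem \ref{homthm} ``can be rephrased'' in the perforated-domain setting. Your three steps (checking the structural hypotheses of Section \ref{setting_sec}, deducing coerciveness from the extension theorem, and identifying $\gamma(z)$ with formula \eqref{homformula-set} via Lemma \ref{lemma-two} and a boundary-layer estimate) are precisely the implicit content of that remark, and your discussion of the bookkeeping needed to match Definition \ref{coerciveness} with the output of Theorem \ref{t_ext} is more explicit than anything the paper provides.
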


}

\subsection*{Acknowledgments.}
The authors acknowledge the MIUR Excellence Department Project awarded to the Department of Mathematics, University of Rome Tor Vergata, CUP E83C18000100006.

\end{document}